\documentclass[11pt,fleqn]{article}
\pagestyle{plain}
\input psfig.sty

\makeindex

\newcommand{\EMPTYSET}{\mbox{$\O$}}

\newcommand{\HHS}{\mbox{$\hspace{2pt}$}}
\newcommand{\CL}{\mbox{$\varphi$}}
\newcommand{\OPER}{\mbox{$\alpha$}}

\newcommand{\NOT}{\mbox{$-$}}
\newcommand{\INV}{\mbox{$^{-1}$}}

\newcommand{\CALA}{\mbox{${\cal A}$}}

\newcommand{\CALC}{\mbox{${\cal C}$}}

\newcommand{\CALN}{\mbox{${\cal N}$}}

\newcommand{\CALS}{\mbox{${\cal S}$}}

\newcommand{\COMP}{\mbox{${\HHS \bf \cdot} \HHS$}}

\newcommand{\GEN}{\mbox{$\gamma$}}

\newcommand{\CAT}{\mbox{${\cal C}$}}

\newcommand{\ARROW}{\mbox{$\rightarrow$}}

\newcommand{\SET}{\mbox{${\bf Set}$}}
\newcommand{\POW}{\mbox{${\bf Pow}$}}

\newcommand{\POSET}{\mbox{${\bf Porder}$}}

\newcommand{\DOM}{\mbox{${\bf Dom}$}}

\newcommand{\OPR}{\mbox{${\bf Opr}$}}

\newcommand{\MCONT}{\mbox{${\bf MCont}$}}

\newcommand{\TRANS}{\mbox{${\bf Trans}$}}

\newcommand{\UTRANS}{\mbox{${\ \stackrel{u}{\longrightarrow}\ }$}}
\newcommand{\ATRANS}{\mbox{${\ \stackrel{\alpha}{\longrightarrow}\ }$}}
\newcommand{\FTRANS}{\mbox{${\ \stackrel{f}{\longrightarrow}\ }$}}
\newcommand{\FPTRANS}{\mbox{${\ \stackrel{f'}{\longrightarrow}\ }$}}

\newcommand{\GTRANS}{\mbox{${\ \stackrel{g}{\longrightarrow}\ }$}}
\newcommand{\GPTRANS}{\mbox{${\ \stackrel{g'}{\longrightarrow}\ }$}}
\newcommand{\FGTRANS}{\mbox{${\ \stackrel{f.g}{\longrightarrow}\ }$}}

\newcommand{\HXTRANS}{\mbox{${\ \stackrel{h_X}{\longrightarrow}\ }$}}

\newcommand{\HYTRANS}{\mbox{${\ \stackrel{h_Y}{\longrightarrow}\ }$}}

\newcommand{\HTRANS}{\mbox{${\ \stackrel{h}{\longrightarrow}\ }$}}
\newcommand{\KTRANS}{\mbox{${\ \stackrel{k}{\longrightarrow}\ }$}}
\newcommand{\FHTRANS}{\mbox{${\ \stackrel{f.h}{\longrightarrow}\ }$}}
\newcommand{\KGTRANS}{\mbox{${\ \stackrel{k.g}{\longrightarrow}\ }$}}

\newcommand{\NBHD}{\mbox{$\eta$}}
\newcommand{\REG}{\mbox{$\Delta$}}
\newcommand{\DEL}{\mbox{$\Delta$}}

% \newcommand{\DEL}{\mbox{$\chi$}}

%  Rewrite these if \CL or \GEN change

% \newcommand{\RA}{\mbox{$\ \Longrightarrow$}}

\newtheorem{theorem}{Theorem}[section]

\newtheorem{corollary}[theorem]{Corollary}

\newtheorem{proposition}[theorem]{Proposition}

\newenvironment{proof}{\small {\bf Proof:} }{}

\newcommand{\qed}{\ }

\setlength{\textwidth}{6.0in}
\setlength{\oddsidemargin}{0.25in}
\setlength{\evensidemargin}{0.25in}
\setlength{\mathindent}{0.5in}

\begin{document}

\bibliographystyle{plain}
\bibstyle{plain}

\title{
	{\bf Domination and Closure}
	}
   
\author{
   John L. Pfaltz\\
   Dept. of Computer Science,  University of Virginia \\
    {\tt jlp@virginia.edu } \\
   }

\maketitle

\begin{abstract}
An expansive, monotone operator is dominating;
if it is also idempotent it is a closure operator.
Although they have distinct properties, these two kinds of 
discrete operators are also intertwined.
Every closure is dominating; every dominating operator 
embodies a closure.
Both can be the basis of continuous set transformations.
Dominating operators that exhibit categorical pull-back
constitute a Galois connection and must be antimatroid 
closure operators.
Applications involving social networks and learning spaces
are suggested.
\end{abstract}

\noindent
{\bf keywords:} antimatroid, operator, pull-back, category, closure, domination.
% \newpage
\section {Introduction} \label{I}

% Domination and closure are important, and well studied, concepts in graph
% theory.
% In this paper we develop the extremely close relationship between these
% concepts, while simultaneously extending them to a more general context of
% ``set systems''.
% Both domination and closure will be treated as set-valued operators $\REG$
% and $\CL$, respectively. 
% 
% An ``operator'', such as $\REG$, on a set $S$ is a function that maps 
% the subsets of $S$ into $S$.
% When a set-valued function $f$ maps $S$ into a distinct set system $S'$ we call
% it a ``transformation''.
% The notion of set-valued transformations, or functions, or maps is not
% original, but little has been done when the domains are discrete.
% In 1972, Bourgin
% \cite{Bou72}
% wrote a paper with this title, but he assumed that the
% spaces were paracompact, not discrete as in our case.
% Dyn, Farkhi and Mokhov
% \cite{DynFarMok08}
% also assumed compact sets.
% Phelps' definition
% \cite{Phe89}
% of a set-valued map assumes the domain is a Banach space.
% Integrability is assumed in 
% \cite{Cor88, ZhaGuo95}.
% Most often the underlying spaces have been assumed to be real and compact, 
% paracompact, or at least integrable
% \cite{Bou72,DynFarMOK08,Phe89,ZhaGuo95}.
% Of course, inverse operators are typically set-valued, $e.g.$
% \cite{Nal12}.

The concept of ``domination'' is an important one in graph theory, where a
set of nodes ``dominates'' its neighbors.
An extensive treatment can be found in
\cite{HayHedSla98b,HayHedSla98,Sum90},
and an interesting historical application in
\cite{ReVRos00}.
Concepts of ``closure'' arise in many contexts, including topology,
% \cite{CraDil73},
algebra,
% \cite{Kra09},
and its closely related concept of ``convexity''
\cite{Chv09}.

This paper develops both concepts in terms of general, discrete set systems,
where $\REG$ and $\CL$ are dominating and closure operators
respectively.
In Section \ref{CO}, we develop the connection between domination
and closure that seems to be largely unexplored.
Every domination operator gives rise to a closure operator, and every
closure operator is dominating.

Section \ref{T} explores the properties of domination and closure under
functional transformation, especially continuous transformation, and
briefly reviews the definition of closure in terms of Galois transformations,
or connections.

It is natural to regard collections of functions, whether operators or
transformations, as a category. 
In Section \ref{CC}, we develop this theme and introduce the 
category $\DOM$ to denote all domination
morphisms over discrete sets of $S$.
We show that if a subcategory $\CALC \subset \DOM$ exhibits the 
pull-back property, then it consists of only antimatroid closure operators.

We believe this approach to the study of set systems, including
directed and undirected networks, solely in terms of set-valued operators
and the representation of set system dynamics by 
set-valued transformations from one discrete  set system, $\CALS$,
to another, $\CALS'$,
may be original.

% We use operators, introduced in Section \ref{SS}, to structure these
% set systems.

% \medskip

% \noindent
% {\bf [Expand and/or rewrite?]}
\section {Set Systems} \label{SS}

% A set system $\CALS$ is a discrete collection of sets.
% Very often these are subsets
% of a ground space whose members we denote
Let $S$ be any finite set.
By a set system, $\CALS$, we mean a collection of subsets of $2^S$, the power
set of $S$, together with various operators, $\alpha, \beta, \ldots$
defined on this collection.
Elements of the ground set, $S$, we denote
by lower case letters $x, y, z$.
In the general theory, the nature of these members is unimportant; 
although in some
applications
%  such as those suggested in Section \ref{AFCA},
they can be significant.
% The union of all the subsets is assumed to be in $\CALS$.
% By $\CALS$ we will denote the collection of all the sets of $\CALS$.

The sets of $\CALS$ are denoted by upper case letters, $X, Y, Z$.
% By the universal set $S \in \CALS$ we mean $S = \bigcup_{X \in \CALS} X$.
We assume that $S = \bigcup_{X \in \CALS} X$.
Sets can also be denoted by their constituent members, such as $\{x, y, z\}$,
or more simply by $xyz$.
One can regard $xyz$ to be the label of a set.
Whenever we reference an element, such as $x$, we will actually be denoting
the singleton set $\{x\}$.
And if we use a familiar expression, such as $x \in Y$, it can be interpreted
as $\{x\} \ \cap \ Y \neq \EMPTYSET$.
This is a paper about sets, their properties and their transformations.
The cardinality of a set $Y$ is denoted $| Y |$.
% The proof of known propositions will be established by citation;
% proofs are provided for original propositions.

\subsection{Operators on $\CALS$} \label{OS}

An unary {\bf operator} $\OPER$ on $\CALS$ is a function defined on the sets of $\CALS$, 
that is, for all $Y \in \CALS$, $Y.\OPER \in \CALS$.
Operators are expressed in suffix notation because they are ``set-valued''.\footnote
	{
	Here we follow a convention that is more often used by algebraists
	\cite{ZarSam58}.
	Set valued functions/transformations, $f$, are presented in suffix notation,
	$e.g.$ $S' = S.f$; single valued functions, $f$, on set elements are denoted
	by prefix notation, $e.g.$ $e' = f(e)$.
	}
An operator $\OPER$ is said to be:
\\
\hspace*{0.4in}
{\bf contractive}, if $Y.\OPER \subseteq Y$;
\\
\hspace*{0.4in}
{\bf expansive}, if $Y \subseteq Y.\OPER$;
\\
\hspace*{0.4in}
{\bf monotone}, if $X \subseteq Y$ implies $X.\OPER \subseteq Y.\OPER$;
\\
\hspace*{0.4in}
{\bf idempotent}, if $Y.\OPER.\OPER = Y.\OPER$;
\\
\hspace*{0.4in}
{\bf path independent} if $(X.\OPER \cup Y.\OPER).\OPER = (X \cup Y).\OPER$.

\noindent
Contractive operators are often called {\bf choice operators}
% \cite{Arr59,Kos99,Plo73,Sen77}.
\cite{Kos99,Sen77}.
Path independent choice operators are important in economic theory
\cite{Johdea01,MonRad01}.
Operators, $\REG$, that are expansive and monotone we call
{\bf domination} (or dominating) operators.
They are central to this paper.

If $X.\OPER = Y.\OPER = Z$ then $X$ and $Y$ are said to be {\bf generators} of $Z$ (with
respect to $\OPER$).
A  set $Y$ is said to be a {\bf minimal} generator of $Z$ if for all $X \subset Y$,
$X.\OPER \neq Z$.
The operator $\OPER$ is said to be {\bf uniquely generated} if for all $Z$,
$Y.\OPER = Z$ implies there exists a unique minimal generator $X \subseteq Y$, such that $X.\OPER = Z$.
When $X.\OPER = Y.\OPER$, we say $X$ and $Y$ are {\bf $\OPER$-equivalent},
denoted $X =_{\OPER} Y$.
 
\subsection {Extended Operators} \label{EO}

An operator $\OPER$ is said to be {\bf extended} if for all $Y \in \CALS$,
$Y.\OPER = \bigcup_{y \in Y} \{y\}.\OPER$.
That is, $\OPER$ has been extended from its definition on singleton subsets.
This is, perhaps, the most common way of defining set-valued operators.
It is not difficult to show that:

\begin{samepage}
\begin{proposition}\label{p.EX1}
If $\OPER$ is an extended operator, then for all $z \in Y.\OPER$, there exists
$y \in Y$ such that $z \in \{y\}.\OPER$.
\end{proposition}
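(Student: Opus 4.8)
The plan is to unfold the definition of an extended operator and then apply the elementary characterization of membership in a union; since the statement is essentially a restatement of that definition, the argument is short and requires no auxiliary constructions.

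First I would fix an arbitrary $Y \in \CALS$ and an arbitrary element $z \in Y.\OPER$. Because $\OPER$ is assumed to be extended, its defining equation $Y.\OPER = \bigcup_{y \in Y} \{y\}.\OPER$ applies, so the membership $z \in Y.\OPER$ may be rewritten as $z \in \bigcup_{y \in Y} \{y\}.\OPER$. Next I would invoke the meaning of membership in an indexed union: an element belongs to $\bigcup_{y \in Y} \{y\}.\OPER$ precisely when it belongs to at least one of the sets being united. Hence there is some $y \in Y$ with $z \in \{y\}.\OPER$, which is exactly the claimed conclusion.

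There is essentially no obstacle here; the proposition is the definition of \emph{extended} combined with the set-theoretic fact that $w \in \bigcup_i A_i$ holds iff $w \in A_i$ for some index $i$. The one point worth a word of care is that the index set of the union is $Y$ itself, so the witnessing $y$ is automatically an element of $Y$, as the statement demands. Since $S$ is finite, the union is finite, but finiteness plays no essential role in the argument.
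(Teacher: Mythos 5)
Your proof is correct and is exactly the argument the paper has in mind: the paper merely remarks that the proposition ``is not difficult to show,'' and the intended justification is precisely your unfolding of the definition $Y.\OPER = \bigcup_{y \in Y} \{y\}.\OPER$ followed by the standard characterization of membership in an indexed union. Nothing further is needed.
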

\end{samepage}
% % #ifndef 10
% % #define	10 0
% % #endif
% % #if 10 > 7
% \noindent
% \begin{proof}
% Let $z \in Y.\OPER$.
% If there does not exist $y \in Y$, such that $z \in \{y\}.\OPER$ then
% $z \not\in \cup_{y \in Y} \{y\}.\OPER$, contradicting assumption that $\OPER$ is
% extended.
% \qed
% \end{proof}
% % #endif

\begin{samepage}
\begin{corollary}\label{c.EX2}
If $\OPER$ is an extended operator, then $\OPER$ is a monotone operator.
\end{corollary}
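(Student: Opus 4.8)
The plan is to prove monotonicity directly from the definitions by establishing the set inclusion $X.\OPER \subseteq Y.\OPER$ element by element, using Proposition~\ref{p.EX1} as the key tool. Since $\OPER$ is extended, every element of an image set is ``witnessed'' by a singleton, and since $X \subseteq Y$ every such witness for $X$ is already available to $Y$; this is the entire content of the argument.

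First I would assume $X \subseteq Y$ and fix an arbitrary element $z \in X.\OPER$, the goal being to show $z \in Y.\OPER$. By Proposition~\ref{p.EX1} applied to the set $X$, there exists some $x \in X$ with $z \in \{x\}.\OPER$. Because $X \subseteq Y$, this same $x$ lies in $Y$. Finally I would invoke the defining equation of an extended operator on $Y$, namely $Y.\OPER = \bigcup_{y \in Y} \{y\}.\OPER$: since $x \in Y$ and $z \in \{x\}.\OPER$, the element $z$ belongs to this union, so $z \in Y.\OPER$. As $z$ was arbitrary, $X.\OPER \subseteq Y.\OPER$, which is exactly the monotonicity condition.

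There is essentially no obstacle here; the result is a routine unpacking of the extended-operator identity, which is why it is stated as a corollary to Proposition~\ref{p.EX1} rather than proved afresh. The only genuine choice is stylistic: one could bypass the proposition entirely and argue directly from the union form, observing that $X \subseteq Y$ makes the index set of $\bigcup_{x \in X} \{x\}.\OPER$ a subset of the index set of $\bigcup_{y \in Y} \{y\}.\OPER$, whence the first union is contained in the second. I would favour citing Proposition~\ref{p.EX1}, since it keeps the reasoning at the level of individual elements and makes the logical dependence on the preceding result explicit.
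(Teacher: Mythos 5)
Your proof is correct and is exactly the routine argument the paper has in mind (the paper omits the proof as immediate): the witness $x \in X \subseteq Y$ from Proposition~\ref{p.EX1}, combined with the union definition $Y.\OPER = \bigcup_{y \in Y}\{y\}.\OPER$, gives $z \in Y.\OPER$ directly. Your closing observation that one could skip the proposition and compare the two unions over their index sets is also sound, and either phrasing settles the corollary.
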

\end{samepage}
% % #ifndef 10
% % #define	10 0
% % #endif
% % #if 10 > 7
% \noindent
% \begin{proof}
% Readily, if $X \subseteq Y$ then
% $X.\OPER = \cup_{x \in X}\{x\}.\OPER \subseteq \cup_{x \in Y} \{x\}.\OPER = Y.\OPER$.
% \qed
% \end{proof}
% % #endif

\begin{samepage}
\begin{corollary}\label{c.EX3}
If $\OPER$ is an extended operator, then $\EMPTYSET.\OPER = \EMPTYSET$.
\end{corollary}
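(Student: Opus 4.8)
The plan is to read the conclusion directly off the defining identity for an extended operator, specialized to the degenerate case $Y = \EMPTYSET$. Recall that $\OPER$ being extended means $Y.\OPER = \bigcup_{y \in Y} \{y\}.\OPER$ for every $Y \in \CALS$. Everything hinges on evaluating the right-hand side when the indexing set is empty, so the only real work is to make that evaluation explicit.

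First I would substitute $Y = \EMPTYSET$ into the defining equation, obtaining $\EMPTYSET.\OPER = \bigcup_{y \in \EMPTYSET} \{y\}.\OPER$. The right-hand side is a union indexed by the members of $\EMPTYSET$; since there are none, it is the empty union, which by the usual convention is $\EMPTYSET$. This gives $\EMPTYSET.\OPER = \EMPTYSET$ at once. A second, perhaps more transparent, route is to invoke Proposition \ref{p.EX1}: if some $z$ belonged to $\EMPTYSET.\OPER$, the proposition would furnish a witness $y \in \EMPTYSET$ with $z \in \{y\}.\OPER$, yet $\EMPTYSET$ has no members, so no such $z$ can exist and $\EMPTYSET.\OPER$ must be empty. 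Either argument closes the corollary in a line or two.

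The ``hard part'' is really just a matter of convention rather than mathematics: one must be comfortable asserting that a union over an empty index set yields $\EMPTYSET$. The only standing hypothesis to flag is that $\EMPTYSET$ itself lies in $\CALS$, so that the expression $\EMPTYSET.\OPER$ is even defined; granting this, the result is immediate and requires no further case analysis. I would therefore present the proof in its one-line form, noting the empty-union convention explicitly so the reader is not left to supply it.
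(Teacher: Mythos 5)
Your proof is correct, and it is exactly the argument the paper intends (the paper omits the proof as immediate): specializing the extension identity $Y.\OPER = \bigcup_{y \in Y} \{y\}.\OPER$ to $Y = \EMPTYSET$ and invoking the empty-union convention settles it, with your appeal to Proposition \ref{p.EX1} giving an equivalent contrapositive phrasing of the same fact. Your side remark that $\EMPTYSET \in \CALS$ must hold for $\EMPTYSET.\OPER$ to be defined is a fair observation, consistent with the paper's later explicit use of expressions like $\EMPTYSET.f$ in Section \ref{T}.
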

\end{samepage}
% \noindent
% \begin{proof}
% If not, Prop. \ref{p.EX1} is violated.
% \qed
% \end{proof}
\medskip

If an operator $\OPER$ is extended, then it can be visualized as a simple graph with
$(x, y) \in E$ if and only if $y \in \{x\}.\OPER$.
We call this a {\bf graphic representation}.

\subsection{Dominating Operators} \label{DO}

Throughout this paper we concentrate on $expansive$, $monotone$ operators which
we called  domination 
operators, $\REG$, in Section \ref{OS}.
We call $Y.\REG$ the {\bf region} dominated by $Y$.
If $S$ denotes the nodes of a network $\CALN$, one can define
$X.\REG = X \cup \{ y | \exists x \in X, (x, y) \in E \}$.
$X$ is said to ``dominate'' $X.\REG$ and there is a large literature, called
``domination theory'', devoted to the combinatorial properties of the minimal
generators, $X$, when
$X.\REG = S$
\cite{HayHedSla98}.
Whence the term ``domination'' operator.
However, domination operators need not be graphically representable.  
Consider the following $\REG$ defined on $S = \{a, b, c, d\}$.
For the singleton sets, let
$\{a\}.\REG = \{ac\}$,
$\{b\}.\REG = \{bc\}$,
$\{c\}.\REG = \{cd\}$, and
$\{d\}.\REG = \{d\}$.
Except for $\{ab\}$, let $Y.\REG = \bigcup_{y \in Y}\{y\}.\REG$, but
let $\{ab\}.\REG = \{abcd\}$.
This is not a simple extension of $\{a\}.\REG$ and $\{b\}.\REG$;
yet $\REG$ is well defined.
It should not be surprising that a larger set might have a 
larger radius of domination.

If the expansive, monotone operator, $\REG$, is also idempotent, 
it is called a
{\bf closure operator}, $\CL$.
% These are examined in Section \ref{CO}.

It is sometimes convenient to distinguish that part of a dominated region $Y.\REG$
from its generator $Y$.
We call $Y.\NBHD = Y.\REG \NOT Y$ the dominated {\bf neighborhood} of $Y$.
Observe that, as an operator, $\NBHD$ is not expansive and generally is not 
monotone.

% A dominating operator, $\REG$, is said to be {\bf symmetric} if for all singleton
% sets, $\{x\}$, $\{y\} \subseteq \{x\}.\REG$ implies $\{x\} \subseteq \{y\}.\REG$.
% In general, $Y \subseteq X.\REG$ will not imply $X \subset Y.\REG$, even if
% $\REG$ is symmetric.

% \subsection{Subset Lattices} \label{CL}
% 
% {\bf [Don't include unless really necessary]}
% 
% The interrelationships between the sets in a system, $\CALS$, can be 
% represented by a {\bf subset lattice}, $\LATTICE_{\CALS}$, in which subsets are
% partially ordered by containment,
% that is, $X < Y$ if $X \subset Y$.
% If $\LATTICE_{\CALS}$ consists of all the subsets of $\CALS$, it is a well-known
% complete lattice with $X \INF Y = X \cap Y$ and $X \SUP Y = X \cup Y$
% \cite{XXX}.
% Subset lattices, in which only the closed subsets are included have been called
% {\bf closure lattices}
% \cite{Pfa96}
% For any pair, $X$ and $Y$, $X \INF Y = X \cap Y$, which must be closed, hence in
% $\LATTICE_{\CALS}$.
% Consequently, $(\CALC, \leq)$ is an inf-semilattice.
% Since $\UNIV$ is closed, in $\UNIV$ is finite
% $\LATTICE_{\CALS} = (\CALC, \leq)$ is a complete lattice.
% (The notation $X \leq Y$ or $X \INF Y$ indicates that $X, Y$ denote arbitrary elements
% in the lattice $\LATTICE$; the notation $X \subseteq Y$, $X \cap Y$ indicates that they
% denote sets in the closure space.)
% 
% These closure lattices have been shown to be quite regular whenever the closure operator
% is antimatroid
% \cite{EdeJam85, Pfa96}.
% With other closures they can be quite irregular.
% In any case they can be useful to visualize the closed set structure.

\section {Closure Operators} \label{CO}

The concept of closure appears to be an important theme in many discrete systems
\cite{Pfa08,Pfa13}.
A closure system can
be defined by simply enumerating a collection, $\CALC$, of sets which are said to
be {\bf closed}.
The union of all the subsets of $\CALS$ is assumed to be in $\CALC$.
The only other constraint is that if the sets $X$ and $Y$ are in $\CALC$, $i.e.$ are
closed, then $X \ \cap \ Y \in \CALC$, must be closed.

Given such a collection, $\CALC$, of closed sets
we can then define a {\bf closure operator}, $\CL$, on $\CALS$ by
letting $Y.\CL$ denote the smallest set $C_i \in \CALC$ such that $Y \subseteq C_i$.
Since $\CALC$ is closed
% \footnote
%	{
%	This alternative meaning of ``closed'' is unfortunate and 
%	can be confusing.
%	}
under intersection, $\CL$ is single valued and well
defined.
It is well known
\cite{Ore62,Pfa96}
that this definition of closure is equivalent to
the one given in section \ref{OS}, that is
``an operator $\CL$ is a closure operator if and only if
\\
\hspace*{0.4in}
$Y \subseteq Y.\CL$, expansive;
\\
\hspace*{0.4in}
$X \subseteq Y$ implies $X.\CL \subseteq Y.\CL$, monotone; and
\\
\hspace*{0.4in}
$Y.\CL.\CL = Y.\CL$, idempotent''.
\\
Consequently, every closure operator, $\CL$, is a dominating operator because it's
monotone and expansive.
A dominating operator, $\DEL$, is a closure operator, $\CL$
only if it is idempotent.

For any monotone operator, $\OPER, \DEL$ or $\CL$, we have
\\
\hspace*{0.4in}
	$(X \cap Y).\OPER \subseteq X.\OPER \cap Y.\OPER$,
\\
\hspace*{0.4in}
	$X.\OPER \cup Y.\OPER \subseteq (X \cup Y).\OPER$.
% It is well known that in the case of closure operators
% \\
% \hspace*{0.4in}
% 	$(X \cap Y).\CL \subseteq X.\CL \cap Y.\CL$,
% #ifdef 1
% \noindent
% {\tt 'p.UNION'}
% #endif
% \begin{samepage}
% \begin{proposition}\label{p.UNION}
% If $\REG$ is extended then $X.\REG \cup Y.\REG \subseteq (X \cup Y).\REG$.
% \end{proposition}
% \end{samepage}
% #ifndef 10
% #define	10 0
% #endif
% #if 10 > 7
% \noindent
% \begin{proof}
% Since $\REG$ is extended
% $(X \cup Y).\REG = \bigcup_{y \in X \cup Y} y.\REG = $
% $\bigcup_{y \in X} y.\REG \cup \bigcup_{y \in Y}y.\REG = X.\REG \cup Y.\REG$.
% \qed
% \end{proof}
% #endif
 
\begin{samepage}
\begin{proposition}\label{p.TC}
Let $\CL$ be an idempotent dominating operator $\DEL$.
If $y \in X.\CL$ then $\{y\}.\NBHD \subseteq X.\DEL$.
\end{proposition}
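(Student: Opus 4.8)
The plan is to reduce the claim about the neighborhood $\{y\}.\NBHD$ to a single application of monotonicity followed by idempotence. First I would unwind the definition of the neighborhood operator from Section~\ref{DO}: since the ambient dominating operator here is the closure $\CL$ itself (the proposition takes the idempotent dominating operator $\DEL$ to be $\CL$), we have $\{y\}.\NBHD = \{y\}.\CL \NOT \{y\}$. Hence it suffices to prove the slightly stronger inclusion $\{y\}.\CL \subseteq X.\CL$, because $\{y\}.\NBHD \subseteq \{y\}.\CL$ and $X.\DEL = X.\CL$.

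The core argument is then two steps. From the hypothesis $y \in X.\CL$ I obtain $\{y\} \subseteq X.\CL$, recalling the convention of Section~\ref{SS} that an element is identified with its singleton. Applying monotonicity of $\CL$ to this inclusion gives $\{y\}.\CL \subseteq (X.\CL).\CL$. Idempotence then collapses the right-hand side, $(X.\CL).\CL = X.\CL$, so $\{y\}.\CL \subseteq X.\CL$, as required.

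Combining the two observations, $\{y\}.\NBHD = \{y\}.\CL \NOT \{y\} \subseteq \{y\}.\CL \subseteq X.\CL = X.\DEL$, which completes the proof.

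The only real subtlety, and the place where a careless reading could go astray, is the reduction in the first step: one must recognize that the neighborhood $\{y\}.\NBHD$ is formed using the very operator under consideration, so that all three defining properties of a closure (expansive, monotone, idempotent) are available. In fact no expansiveness is needed; monotonicity and idempotence carry the whole argument, and there is no delicate estimation to perform. So I expect the ``main obstacle'' to be purely notational rather than mathematical.
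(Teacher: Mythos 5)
Your proof is correct and takes essentially the same route as the paper: the paper's proof is just the contradiction form of your argument (supposing $z \in \{y\}.\NBHD$ but $z \not\in X.\DEL$, it derives $z \in X.\DEL.\DEL$, contradicting idempotency), whereas you run the identical monotonicity-plus-idempotence step directly via $\{y\}.\CL \subseteq (X.\CL).\CL = X.\CL$. The two presentations use exactly the same ingredients, so there is nothing further to compare.
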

\end{samepage}
% #ifndef 10
% #define	10 0
% #endif
% #if 10 > 7
\noindent
\begin{proof}
Suppose $\exists X$ and $y, z$, with $y \in X.\CL, z \in \{y\}.\NBHD$, but
$z \not\in X.\DEL$.
Then $z \in X.\DEL.\DEL$ contradicting the idempotency of $\DEL$.
\qed
\end{proof}
% #endif

\medskip
\noindent
Proposition \ref{p.TC} effectively asserts that for a dominating 
operator, $\DEL$, to be a closure operator, $\DEL$ must be ``transitively
closed''.
Still a third characterization of closure systems can be found in
\cite{MonRad01};
% We repeat their proof because it is not as widely known.
\begin{proposition}\label{prop.CC3}
An expansive operator $\CL$ is a closure operator if and only if $\CL$
is path independent.
\end{proposition}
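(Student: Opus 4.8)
The plan is to prove both implications of the biconditional, using the characterization of a closure operator recalled in Section \ref{CO}, namely that $\CL$ is a closure operator exactly when it is expansive, monotone, and idempotent. Since $\CL$ is assumed expansive throughout, the whole content lies in trading the pair (monotone, idempotent) against the single property of path independence, $(X.\CL \cup Y.\CL).\CL = (X \cup Y).\CL$.

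For the forward direction I would assume $\CL$ is a closure operator and establish path independence by two inclusions. For ``$\supseteq$'', expansiveness gives $X \subseteq X.\CL$ and $Y \subseteq Y.\CL$, hence $X \cup Y \subseteq X.\CL \cup Y.\CL$, and monotonicity then yields $(X \cup Y).\CL \subseteq (X.\CL \cup Y.\CL).\CL$. For ``$\subseteq$'', monotonicity applied to $X \subseteq X \cup Y$ and to $Y \subseteq X \cup Y$ gives $X.\CL \cup Y.\CL \subseteq (X \cup Y).\CL$; applying $\CL$ and monotonicity once more, followed by idempotency, gives $(X.\CL \cup Y.\CL).\CL \subseteq (X \cup Y).\CL.\CL = (X \cup Y).\CL$. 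The two inclusions together give the desired equality.

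For the converse I would assume $\CL$ is expansive and path independent and recover the two missing closure properties. Idempotency falls out of the substitution $X = Y$: path independence then reads $(Y.\CL \cup Y.\CL).\CL = (Y \cup Y).\CL$, that is $Y.\CL.\CL = Y.\CL$. Monotonicity I would obtain by taking $X \subseteq Y$, so that $X \cup Y = Y$; applying expansiveness to the set $X.\CL \cup Y.\CL$ gives the chain $X.\CL \subseteq X.\CL \cup Y.\CL \subseteq (X.\CL \cup Y.\CL).\CL = (X \cup Y).\CL = Y.\CL$, which is exactly $X \subseteq Y \Rightarrow X.\CL \subseteq Y.\CL$.

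The delicate point to watch is in this converse: monotonicity is precisely what is being derived, so the argument must invoke only expansiveness (and path independence), never monotonicity itself. The chain above respects this, since its only inclusion steps are the purely set-theoretic $X.\CL \subseteq X.\CL \cup Y.\CL$ and a single application of expansiveness to the whole union, with no hidden appeal to $\CL$ being monotone. Everything else is routine inclusion-chasing, so I expect no substantive obstacle beyond keeping that non-circularity explicit.
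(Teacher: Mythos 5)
Your proof is correct, and it is worth noting that the paper itself offers no proof of this proposition at all: it simply attributes the characterization to the cited reference \cite{MonRad01} and moves on. Your argument therefore supplies a complete, self-contained verification where the paper relies on the literature. Both directions check out: in the forward direction the two inclusions use exactly expansiveness plus monotonicity for ``$\supseteq$'' and monotonicity plus idempotency for ``$\subseteq$''; in the converse, idempotency via the substitution $X = Y$ and monotonicity via $X \subseteq Y \Rightarrow X \cup Y = Y$ together with the chain $X.\CL \subseteq X.\CL \cup Y.\CL \subseteq (X.\CL \cup Y.\CL).\CL = (X \cup Y).\CL = Y.\CL$ are the standard moves, and you correctly flag and avoid the one real trap, namely invoking monotonicity while deriving it --- your only inclusions there are set-theoretic or a single application of expansiveness to the union $X.\CL \cup Y.\CL$ as a whole. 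One pedantic caveat you might add for completeness: applying $\CL$ to $X.\CL \cup Y.\CL$ presumes the operator is defined on such unions, i.e.\ that the set system $\CALS$ is closed under the relevant unions; the paper glosses over the same point in stating path independence, so this is a shared, not a new, gap.
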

% \begin{proof}
% Let $\CL$ be expansive and let $X, Y \subseteq \UNIV$.
% Assume $\CL$ is a closure operator.
% Since $X, Y \subseteq X \cup Y$, $X.\CL \cup Y.\CL \subseteq (X \cup Y).\CL$.
% And because $X \subseteq X.\CL$ and $Y \subseteq Y.\CL$, we have
% $(X.\CL \cup Y.\CL).\CL \subseteq (X \cup Y).\CL \subseteq (X.\CL \cup Y.\CL).\CL$,
% so $(X \cup Y).\CL = (X.\CL \cup Y.\CL).\CL$.
% \\
% Conversely, if $\CL$ is path independent, then $X \subseteq Y$
% implies $X.\CL \subseteq X.\CL \cup Y.\CL \subseteq (X.\CL \cup Y.\CL).\CL = (X \cup Y).\CL = Y.\CL$.
% So $\CL$ is monotone.
% Finally, $Y.\CL.\CL = (Y.\CL \cup Y.\CL).\CL = (Y \cup Y).\CL = Y.\CL$, so
% $\CL$ is idempotent.
% \qed
% \end{proof}
% \medskip

Let $Y$ be closed, a closure operator, $\CL$, is said to be:
\\
\hspace*{0.4in}
{\bf matroid} \hspace*{0.8in} if $x, z \not\in Y$ then $z \in (Y \cup \{ x \}).\CL$ implies $x \in (Y \cup \{ z \}).\CL$;
\\
\hspace*{0.4in}
{\bf antimatroid} \hspace*{0.5in} if $x, z \not\in Y$ then $z \in (Y \cup \{ x \}).\CL$ implies $x \not\in (Y \cup \{ z \}).\CL$;
\\
\hspace*{0.4in}
{\bf topological} \hspace*{0.58in} if $\EMPTYSET.\CL = \EMPTYSET$ and $(X \cup Y).\CL = X.\CL \cup Y.\CL$.
\\
The first two expressions on the right are also known as the ``exchange'' and ``anti-exchange'' axioms.

Matroids are generalizations of linear independent structures
\cite{KorLovSch91}.
Matroid closure is usually denoted by the ``spanning operator'', $\sigma$,
\cite{Tut71,Wel76}.
Antimatroids are typically viewed as convex geometries,
\cite{Cop98,Jam82},
where closure is the convex {\bf hull} operator, sometimes denoted
by $h$,
\cite{EdeJam85,Jam82}.
% Both have their origins in graph theory, with undirected circuits,
% or directed cycles, representing generating bases.

A closure operator,$\CL$, is said to be {\bf finitely generated} if every closed
set has finite generators.
Since we assume $S$ is finite, all operators will be finitely generated.
In \cite{PfaSla13},
it is shown that:
\begin{samepage}
\begin{proposition}\label{p.FGEN1}
Let $\CALS$ be finitely generated and let $\CL$ be antimatroid. 
If $X$ and $Y$ are generators of a closed set $Z$,
then $X \cap Y$ is a generator of $Z$.
\end{proposition}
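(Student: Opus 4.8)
The plan is to prove the two inclusions separately. The inclusion $(X \cap Y).\CL \subseteq Z$ is immediate and needs nothing beyond monotonicity: since $X \cap Y \subseteq X$ we get $(X \cap Y).\CL \subseteq X.\CL = Z$, which is the first of the monotone identities recorded above. The real work is to establish $Z \subseteq (X \cap Y).\CL$. I would do this by showing that every member of $X$ lying outside $Y$ is \emph{redundant}, in the sense that it already belongs to the closure of the remaining generators, and then stripping such elements away one at a time. Concretely, I induct on $|X \setminus Y|$: if $X \subseteq Y$ then $X \cap Y = X$ and $(X \cap Y).\CL = X.\CL = Z$; otherwise I pick $x \in X \setminus Y$, argue that $x \in (X \setminus \{x\}).\CL$, deduce $(X \setminus \{x\}).\CL = Z$, and apply the induction hypothesis to the generator $X \setminus \{x\}$, whose intersection with $Y$ is still $X \cap Y$ because $x \notin Y$.

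The heart of the argument is the redundancy claim: if $X.\CL = Y.\CL = Z$ and $x \in X \setminus Y$, then $x \in (X \setminus \{x\}).\CL$. Suppose not, and set $W = (X \setminus \{x\}).\CL$; then $W$ is closed, $x \notin W$, and $W \cup \{x\} \supseteq X$, so $(W \cup \{x\}).\CL = Z$. Since $x \in Z \setminus W$ we have $W \neq Z$, hence $Y \not\subseteq W$; list $Y \setminus W = \{y_1, \ldots, y_m\}$, which is finite as $S$ is finite, and form the ascending closed sets $W_0 = W$ and $W_i = (W_{i-1} \cup \{y_i\}).\CL$, so that $W_m = (W \cup Y).\CL = Z$. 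I claim $x \notin W_i$ for every $i$, which contradicts $x \in Z = W_m$. This is proved by induction on $i$: the step is trivial if $y_i \in W_{i-1}$, and otherwise $x, y_i \notin W_{i-1}$ with $W_{i-1}$ closed, while $y_i \in Z = (W_{i-1} \cup \{x\}).\CL$ because $W_{i-1} \cup \{x\} \supseteq W \cup \{x\} \supseteq X$; the anti-exchange axiom applied to the closed set $W_{i-1}$ then forces $x \notin (W_{i-1} \cup \{y_i\}).\CL = W_i$.

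The delicate point, and the step I expect to demand the most care, is this chained application of anti-exchange: one must verify at each stage that the hypothesis of the axiom is genuinely met, namely that the new generator $y_i$ lies in the closure of the current closed set together with $x$. This is exactly where the choice $W = (X \setminus \{x\}).\CL$ pays off, since every $W_{i-1}$ contains $W$ and hence $W_{i-1} \cup \{x\} \supseteq X$, which captures all of $Z$. Two routine closure facts are used silently and should be recorded first: that $z \in (T \setminus \{z\}).\CL$ implies $(T \setminus \{z\}).\CL = T.\CL$, and that $(A.\CL \cup B).\CL = (A \cup B).\CL$, the latter justifying the telescoping identity $W_i = (W \cup \{y_1, \ldots, y_i\}).\CL$. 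Finiteness of $S$ is what makes both the enumeration of $Y \setminus W$ and the outer induction on $|X \setminus Y|$ legitimate, matching the finitely generated hypothesis.
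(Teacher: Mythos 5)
Your proof is correct. Note that the paper itself supplies no argument for this proposition --- it is quoted from \cite{PfaSla13} --- so your write-up fills a gap rather than paralleling an inline proof; judged on its own terms it is complete. The two-layer structure is sound: the outer induction on $|X \setminus Y|$ is legitimate because the redundancy claim shows $x \in (X\setminus\{x\}).\CL$, whence $(X\setminus\{x\}).\CL = Z$ by your first recorded closure fact, and $(X\setminus\{x\}) \cap Y = X \cap Y$ since $x \notin Y$. The heart --- the chained anti-exchange along $W_0 \subseteq W_1 \subseteq \cdots \subseteq W_m = Z$ --- checks out at every stage: each $W_{i-1}$ is closed by construction, $W_{i-1} \supseteq W$ gives $W_{i-1} \cup \{x\} \supseteq X$ and hence $y_i \in Z \subseteq (W_{i-1}\cup\{x\}).\CL$, and the implicit requirement that $x$ and $y_i$ be distinct is automatic because $y_i \in Y$ while $x \notin Y$. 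One small point you use without flagging it: the identity $(W \cup \{x\}).\CL = Z$ needs $W \cup \{x\} \subseteq Z$, which holds since $W = (X\setminus\{x\}).\CL \subseteq X.\CL = Z$ by monotonicity and idempotency; similarly $W \cup Y \subseteq Z$ gives $W_m = Z$. Both silent lemmas ($z \in (T\setminus\{z\}).\CL$ implies $(T\setminus\{z\}).\CL = T.\CL$, and $(A.\CL \cup B).\CL = (A\cup B).\CL$) are valid for any closure operator, and finiteness of $S$ --- which the paper assumes, making $\CALS$ finitely generated --- is exactly what licenses the enumeration of $Y \setminus W$ and the outer induction, so the hypothesis is used where the paper's counterexample (for the companion Proposition \ref{p.FGEN2}) says it must be.
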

\end{samepage}

% \noindent
% And as a corollary,

\begin{samepage}
\begin{proposition}\label{p.FGEN2}
If $\CALS$ is finitely generated, then $\CL$ is antimatroid if and only if $\CL$ is
uniquely generated.
\end{proposition}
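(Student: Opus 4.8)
The plan is to prove the equivalence by establishing each implication separately, leaning on Proposition~\ref{p.FGEN1} for one direction and on a direct contradiction argument for the other. Throughout, $\CL$ is the given idempotent dominating (hence closure) operator on the finite system $\CALS$, so monotonicity, expansiveness and idempotence are all available, and every closed set $Z$ is its own closure.

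First I would prove that if $\CL$ is antimatroid then it is uniquely generated. Fix a closed set $Z$ and let $\mathcal G$ be the collection of all generators of $Z$; this is non-empty since $Z.\CL = Z$. Proposition~\ref{p.FGEN1} says precisely that $\mathcal G$ is closed under the intersection of any two of its members, so, $\CALS$ being finite, the intersection $M = \bigcap \mathcal G$ of all generators of $Z$ is again a generator. Since $M$ is contained in every generator of $Z$, it is the smallest generator and therefore the unique minimal generator: no proper subset of $M$ can generate $Z$, for such a subset would itself lie in $\mathcal G$ and fail to contain $M$. Consequently, for any $Y$ with $Y.\CL = Z$ we have $M \subseteq Y$, and $M$ is the unique minimal generator contained in $Y$; this is exactly unique generation.

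For the converse I would assume $\CL$ is uniquely generated and suppose, toward a contradiction, that the anti-exchange axiom fails. Then there is a closed $Y$ and distinct elements $x, z \notin Y$ with $z \in (Y \cup \{x\}).\CL$ and simultaneously $x \in (Y \cup \{z\}).\CL$. The key computation is that $Y \cup \{x\}$ and $Y \cup \{z\}$ generate the \emph{same} closed set: from $z \in (Y \cup \{x\}).\CL$, monotonicity and idempotence give $(Y \cup \{x,z\}).\CL = (Y \cup \{x\}).\CL$, and symmetrically $(Y \cup \{x,z\}).\CL = (Y \cup \{z\}).\CL$; call this common closure $C$. Now $(Y \cup \{x\}) \cap (Y \cup \{z\}) = Y$ because $x \neq z$ and $x, z \notin Y$. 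Let $M$ be the minimal generator of $C$, which unique generation forces to be globally unique and hence contained in every generator of $C$ (applying the definition to $M_1 \cup M_2$ for any two minimal generators shows they must coincide). Then $M \subseteq (Y \cup \{x\}) \cap (Y \cup \{z\}) = Y$, whence $C = M.\CL \subseteq Y.\CL = Y$; but $x \in C \setminus Y$, a contradiction. Therefore anti-exchange holds and $\CL$ is antimatroid.

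The main obstacle is the reverse direction, specifically turning the local statement of unique generation (a unique minimal generator below each fixed generator) into the global fact that a single minimal generator sits inside \emph{every} generator of $C$; this is what lets me place $M$ inside the intersection $Y$. The closure identities $(Y \cup \{x\}).\CL = (Y \cup \{z\}).\CL = C$ are routine consequences of monotonicity and idempotence, and the forward direction is essentially immediate once Proposition~\ref{p.FGEN1} supplies closure of the generator family under intersection.
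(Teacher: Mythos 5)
Your proof is correct, and a direct comparison is moot: the paper does not prove Proposition~\ref{p.FGEN2} at all, but defers it to \cite{PfaSla13}, so your argument supplies a proof the paper leaves to the literature. Within the paper's own toolkit your route is the natural one. The forward direction is exactly what the placement of Proposition~\ref{p.FGEN1} suggests: since $S$ is finite, the family $\mathcal{G}$ of generators of a closed set $Z$ is finite and nonempty ($Z$ generates itself), closure of $\mathcal{G}$ under pairwise intersection yields that $M = \bigcap \mathcal{G}$ is again a generator, and $M$ is then the unique minimal generator contained in any given generator $Y$, which is precisely the paper's definition of unique generation. In the converse you correctly isolate the one genuinely delicate step: the paper defines unique generation \emph{locally} (a unique minimal generator below each given generator of $Z$), and your device of applying the definition to $M_1 \cup M_2$ --- which is a generator of $C$ since $M_1 \subseteq M_1 \cup M_2 \subseteq C$, and which contains both minimal generators $M_1$ and $M_2$, forcing $M_1 = M_2$ --- legitimately upgrades this to a globally unique minimal generator $M$ contained in every generator. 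That is exactly what licenses $M \subseteq (Y \cup \{x\}) \cap (Y \cup \{z\}) = Y$ and the contradiction $x \in C = M.\CL \subseteq Y.\CL = Y$. Two reading choices you made are also right: anti-exchange must implicitly assume $x \neq z$ (with $x = z$ allowed, the axiom would fail for every closure operator, since $z \in (Y \cup \{z\}).\CL$ trivially), and the identities $(Y \cup \{x\}).\CL = (Y \cup \{x,z\}).\CL = (Y \cup \{z\}).\CL$ follow from expansiveness, monotonicity and idempotence exactly as you state. Note finally that your converse never uses finiteness --- existence of minimal generators is built into the paper's definition of unique generation --- so the finite-generation hypothesis, whose necessity the paper attributes to a counterexample in \cite{PfaSla13}, is consumed entirely by the forward direction, where it guarantees that the full intersection $\bigcap \mathcal{G}$ is reached in finitely many steps.
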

\end{samepage}

\noindent
A counter example is presented in 
\cite{PfaSla13}
to show that the condition of finite generation is necessary.

\subsection{Dominated Closure}  \label{RC1M}

% In Section \ref{OS}, we called the set $Y.\DEL$, generated by any
% monotone, expansive operator $\DEL$, the region {\bf dominated} by $Y$.
It is evident that
the domination operator $\DEL$ and closure operator $\CL$ are closely related.
For example, if $Y.\DEL.\DEL = Y.\DEL$ for all $Y$, then $\DEL$ is a
closure operator.
But, in general, $Y.\DEL \subset Y.\DEL.\DEL$.
In this section, we explore the close relationship between these two operators
even further.

We define {\bf dominated closure} (sometimes denoted by $\CL_{\Delta}$) to be:
\begin{equation} \label{RCDEF}
Y.\CL_{\Delta} = \bigcup_{Z \subseteq Y.\Delta} \{ Z | Z.\DEL \subseteq Y.\DEL \}
\end{equation}
Here it is apparent that, $Y \subseteq Y.\CL \subseteq Y.\DEL$, as was shown
in Proposition \ref{p.TC}.

Although in general $Y.\DEL.\DEL \neq Y.\DEL$, we have
\begin{samepage}
\begin{proposition}\label{p.CLO.REG}
For all $Y$, $Y.\CL.\DEL = Y.\DEL$.
\end{proposition}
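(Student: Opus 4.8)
The plan is to prove the identity by the two opposite inclusions, handling $Y.\DEL \subseteq Y.\CL.\DEL$ as the routine direction and reserving the real work for $Y.\CL.\DEL \subseteq Y.\DEL$. For the first inclusion I would note that $Y$ itself is one of the sets $Z$ over which the union in (\ref{RCDEF}) is taken: by expansiveness $Y \subseteq Y.\DEL$, and $Y.\DEL \subseteq Y.\DEL$ holds trivially, so $Y$ meets both defining conditions and therefore $Y \subseteq Y.\CL$. This is exactly the containment $Y \subseteq Y.\CL \subseteq Y.\DEL$ already recorded after Proposition \ref{p.TC}. Applying the monotone operator $\DEL$ to $Y \subseteq Y.\CL$ then yields $Y.\DEL \subseteq Y.\CL.\DEL$.

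For the reverse inclusion my strategy is to show that $Y.\CL$ is not merely contained in $Y.\DEL$ but is in fact itself a member of the collection
\[ \CALF = \{\, Z \mid Z \subseteq Y.\DEL \ \mbox{and}\ Z.\DEL \subseteq Y.\DEL \,\} \]
whose union defines $Y.\CL$. If $Y.\CL \in \CALF$, then the defining property applied to $Z = Y.\CL$ gives $Y.\CL.\DEL \subseteq Y.\DEL$ at once, completing the argument. Membership splits into two requirements. The condition $Y.\CL \subseteq Y.\DEL$ is immediate, since every $Z \in \CALF$ satisfies $Z \subseteq Y.\DEL$ and hence so does their union. The condition $(Y.\CL).\DEL \subseteq Y.\DEL$ is the crux, and to secure it I would aim to prove that $\CALF$ is closed under finite union; because $S$ is finite, $\CALF$ would then be a finite join-closed family, its union $Y.\CL$ would be the greatest element of $\CALF$, and in particular $Y.\CL$ would lie in $\CALF$.

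The main obstacle is precisely this closure of $\CALF$ under union. A dominating operator is only monotone, not additive, so for $Z_1,Z_2 \in \CALF$ one has in general merely $Z_1.\DEL \cup Z_2.\DEL \subseteq (Z_1 \cup Z_2).\DEL$, possibly with strict containment, and monotonicity by itself delivers only the weaker bound $(Z_1 \cup Z_2).\DEL \subseteq Y.\DEL.\DEL$. The essential step is therefore to control this ``super-additive'' gap and verify that the defining hypotheses $Z_i.\DEL \subseteq Y.\DEL$ already confine any extra domination created by forming the union to within $Y.\DEL$. When $\DEL$ is an extended operator it distributes over unions, the gap is empty, and $(Y.\CL).\DEL = \bigcup_{Z \in \CALF} Z.\DEL \subseteq Y.\DEL$ follows directly; this is the case covering the motivating graph-domination operator $X.\DEL = X \cup \{y \mid \exists x \in X, (x,y) \in E\}$. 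Reconciling the non-additive case, where the union of qualifying sets can dominate outside $Y.\DEL$, is where I expect the argument to require the most care and the heaviest use of the structure of $\DEL$.
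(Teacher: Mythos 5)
Your first inclusion is correct and is the same one-line monotonicity argument the paper uses ($Y \subseteq Y.\CL$, hence $Y.\DEL \subseteq Y.\CL.\DEL$). But the crux you flag and leave open --- that $\CALF$ is closed under union, equivalently that $Y.\CL.\DEL \subseteq Y.\DEL$ --- is a genuine gap, and in fact it cannot be closed: for a general (non-extended) dominating operator the claimed identity is false. Take $S=\{a,b,c,d\}$ with $\{a\}.\DEL=\{a,b,c\}$, $\{b\}.\DEL=\{b\}$, $\{c\}.\DEL=\{c\}$, $\{d\}.\DEL=\{d\}$, and extend by unions except that every $Z \supseteq \{b,c\}$ acquires $d$ as well, so $\{b,c\}.\DEL=\{b,c,d\}$ and $\{a,b,c\}.\DEL=S$; one checks directly that this $\DEL$ is expansive and monotone. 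For $Y=\{a\}$ we get $Y.\DEL=\{a,b,c\}$, and the sets $Z\subseteq Y.\DEL$ with $Z.\DEL\subseteq Y.\DEL$ are exactly $\EMPTYSET$, $\{a\}$, $\{b\}$, $\{c\}$, $\{a,b\}$, $\{a,c\}$; their union is $Y.\CL=\{a,b,c\}$, whence $Y.\CL.\DEL=S\neq\{a,b,c\}=Y.\DEL$. Here $\{a,b\}$ and $\{a,c\}$ lie in $\CALF$ while their union does not --- precisely the super-additive failure you anticipated, realized concretely.

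What your analysis has actually uncovered is a tacit hypothesis in the paper's own proof. That proof reasons element-wise: for $z\in Y.\CL.\DEL$ with $z\notin Y.\CL$ it assumes there exists $y\in Y.\CL$ with $z\in\{y\}.\DEL$, i.e., it assumes $Y.\CL.\DEL\subseteq Y.\CL\cup\bigcup_{y\in Y.\CL}\{y\}.\DEL$, which is exactly the extended-operator property of Section \ref{EO} that you isolated (in the counterexample $d\in Y.\CL.\DEL$ but $d\notin\{y\}.\DEL$ for any single $y\in\{a,b,c\}$; the jump to $d$ is produced only by the pair $\{b,c\}$ acting collectively). Since Section \ref{DO} expressly allows dominating operators that are not extended, the proposition needs extendedness, or some comparable additivity hypothesis, to be true; granting it, your union-closure argument completes immediately and is then essentially equivalent to the paper's proof, merely organized around the family $\CALF$ rather than chasing elements. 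So your self-assessment is accurate, and the difficulty you could not resolve is a flaw in the statement as given rather than a deficiency of your approach.
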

\end{samepage}
% #ifndef 10
% #define	10 0
% #endif
% #if 10 > 7
\noindent
\begin{proof}
Let $z \in Y.\CL.\DEL$.
If $z \in Y.\CL$, then since $Y.\CL \subseteq Y.\DEL$, we are done.
So assume $\exists y \in Y.\CL$, $z \in \{y\}.\DEL$.
But, $y \in Y.\CL$ implies $\{y\}.\DEL \subseteq Y.\DEL$ so $z \in Y.\DEL$.
By monotonicity, $Y.\DEL \subseteq Y.\CL.\DEL$, so equality follows.
\qed
\end{proof}
% #endif
\medskip

\noindent
The operators, $\DEL$ and $\CL$ are not in general commutative, since $Y.\CL.\DEL = Y.\DEL \subset Y.\DEL.\CL$
as shown by the following example.
Let $S = \{ abc \}$ and let $Y = \{a\}$, where $\{a\}.\DEL = \{ab\}$, $\{b\}.\DEL = \{bc\}$, so 
$\{a\}.\CL.\DEL = \{a\}.\DEL = \{ab\}$ as postulated by Proposition \ref{p.CLO.REG}, but
if $\{ab\}.\CL = \{abc\}$ then
$\{a\}.\DEL.\CL = \{ab\}.\CL = \{abc\}$.

\begin{samepage}
\begin{proposition}\label{p.RC}
An operator $\CL$ is a closure operator if and only if there exists a dominating
operator, $\DEL$, related to $\CL$ by (\ref{RCDEF}).
\end{proposition}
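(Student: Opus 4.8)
The plan is to prove both directions by a single structural observation: the right-hand side of~(\ref{RCDEF}) depends on $Y$ only through the set $Y.\DEL$. In the forward direction the natural witness is $\CL$ itself, and in the reverse direction the three closure axioms fall out almost immediately, with idempotency resting on Proposition~\ref{p.CLO.REG}.

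For the ($\Rightarrow$) direction, suppose $\CL$ is a closure operator. Since $\CL$ is expansive and monotone it is, by definition, a dominating operator, so I take $\DEL = \CL$ and verify that~(\ref{RCDEF}) returns $\CL$. The upper bound is immediate: every $Z$ occurring in the union satisfies $Z \subseteq Y.\CL$, so $Y.\CL_{\Delta} \subseteq Y.\CL$. For the reverse inclusion I exhibit $Z = Y.\CL$ as a legitimate term of the union: it satisfies $Z \subseteq Y.\CL$ trivially, and $Z.\CL = Y.\CL.\CL = Y.\CL \subseteq Y.\CL$ by idempotency. Hence $Y.\CL \subseteq Y.\CL_{\Delta}$ and the two operators agree.

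For the ($\Leftarrow$) direction, suppose $\DEL$ is dominating and $\CL$ is defined by $Y.\CL = Y.\CL_{\Delta}$ via~(\ref{RCDEF}). I check the three axioms. For expansiveness, take $Z = Y$: expansiveness of $\DEL$ gives $Y \subseteq Y.\DEL$, while $Y.\DEL \subseteq Y.\DEL$ holds trivially, so $Y$ is a term of the union and $Y \subseteq Y.\CL$. For monotonicity, suppose $X \subseteq Y$; then $X.\DEL \subseteq Y.\DEL$ by monotonicity of $\DEL$, and any $Z$ qualifying for $X.\CL$ (so $Z \subseteq X.\DEL$ and $Z.\DEL \subseteq X.\DEL$) then also satisfies $Z \subseteq Y.\DEL$ and $Z.\DEL \subseteq Y.\DEL$, hence qualifies for $Y.\CL$. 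Therefore $X.\CL \subseteq Y.\CL$.

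The crux is idempotency, and this is where the structural observation does its work. By Proposition~\ref{p.CLO.REG}, $(Y.\CL).\DEL = Y.\DEL$. Since $(Y.\CL).\CL$ is by definition the union of all $Z$ with $Z \subseteq (Y.\CL).\DEL$ and $Z.\DEL \subseteq (Y.\CL).\DEL$, and these conditions refer only to $(Y.\CL).\DEL = Y.\DEL$, the set $(Y.\CL).\CL$ is computed from exactly the same data as $Y.\CL$; hence $Y.\CL.\CL = Y.\CL$. I expect the only point needing care is confirming that Proposition~\ref{p.CLO.REG} is genuinely available here, as its proof uses the fact that $y \in Y.\CL$ implies $\{y\}.\DEL \subseteq Y.\DEL$. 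This itself follows directly from~(\ref{RCDEF}): any such $y$ lies in some term $Z$ of the union with $Z.\DEL \subseteq Y.\DEL$, so $\{y\}.\DEL \subseteq Z.\DEL \subseteq Y.\DEL$ by monotonicity of $\DEL$. With that in hand all three axioms hold, completing the equivalence.
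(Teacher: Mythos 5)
Your proof follows essentially the same route as the paper's: in the forward direction both take $\DEL = \CL$ (the paper notes that idempotency makes \emph{every} $Z \subseteq Y.\CL$ a qualifying term of the union, so the union is exactly $Y.\CL$; your two-inclusion version is equivalent), and in the reverse direction both make Proposition~\ref{p.CLO.REG} carry the entire weight of idempotency. Your observation that the right-hand side of~(\ref{RCDEF}) depends on $Y$ only through $Y.\DEL$ is a cleaner packaging of the paper's comparison of the two unions, and your explicit verification of monotonicity and expansiveness is more careful than the paper's one-line dismissal.

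However, there is a genuine gap, and it sits exactly at the point you flagged and then declared settled. Your patch verifies that $y \in Y.\CL$ implies $\{y\}.\DEL \subseteq Y.\DEL$, but that is only half of what the proof of Proposition~\ref{p.CLO.REG} uses. The other half is the step ``so assume $\exists y \in Y.\CL$, $z \in \{y\}.\DEL$,'' i.e.\ that every $z \in Y.\CL.\DEL$ arises from a \emph{single point} of $Y.\CL$. That pointwise decomposition is Proposition~\ref{p.EX1}, which holds only for \emph{extended} operators, and Section~\ref{DO} explicitly exhibits dominating operators that are not extended ($\{ab\}.\REG = \{abcd\}$ there). Without extendedness the claim actually fails. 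Take $S = \{a,b,d\}$ with $\EMPTYSET.\DEL = \EMPTYSET$, $\{a\}.\DEL = \{a,b\}$, $\{b\}.\DEL = \{b\}$, $\{d\}.\DEL = \{d\}$, $\{b,d\}.\DEL = \{b,d\}$, and $\{a,b\}.\DEL = \{a,d\}.\DEL = \{a,b,d\}.\DEL = \{a,b,d\}$; one checks this $\DEL$ is expansive and monotone. For $Y = \{a\}$ the qualifying $Z$ in~(\ref{RCDEF}) are $\EMPTYSET$, $\{a\}$, $\{b\}$ (but not $\{a,b\}$, since $\{a,b\}.\DEL \not\subseteq \{a,b\}$), so $Y.\CL_{\Delta} = \{a,b\}$. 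Yet $Y.\CL_{\Delta}.\DEL = \{a,b,d\} \neq \{a,b\} = Y.\DEL$, so Proposition~\ref{p.CLO.REG} is simply unavailable here, and indeed $Y.\CL_{\Delta}.\CL_{\Delta} = \{a,b,d\} \neq Y.\CL_{\Delta}$: idempotency, and with it the reverse direction of the proposition, fails outright.

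To be fair, this defect is inherited rather than introduced: the paper's own proofs of Propositions~\ref{p.CLO.REG} and~\ref{p.RC} silently make the same pointwise assumption. But your concluding sentence ``with that in hand all three axioms hold'' is not justified: the singleton fact you established does not yield $Y.\CL_{\Delta}.\DEL \subseteq Y.\DEL$, because a union of qualifying $Z$'s can have a strictly larger $\DEL$-image than the union of their images. The reverse direction needs an additional hypothesis --- most naturally that $\DEL$ is extended, in which case $Z.\DEL \subseteq Y.\DEL$ reduces to the pointwise condition $\{z\}.\DEL \subseteq Y.\DEL$ for all $z \in Z$, Proposition~\ref{p.CLO.REG} goes through, and your ``same data'' argument then closes the proof --- not merely a more careful citation.
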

\end{samepage}
% #ifndef 10
% #define	10 0
% #endif
% #if 10 > 7
\noindent
\begin{proof}
If $\CL$ is a closure operator, then let $\DEL = \CL$.
Readily, $\DEL$ is monotone, expansive because $\CL$ is.
Let $Z \subseteq Y.\DEL$.
Since $Y.\CL.\CL = Y.\CL$, $Z.\DEL = Z.\CL \subseteq Y.\CL = Y.\DEL$ satisfying
equation (\ref{RCDEF}).
\\
\\
Conversely, let $\DEL$ be any monotone, expansive operator, and let $\CL$ be
defined by (\ref{RCDEF}).
Monotonicity and expansivity follow from $Y \subseteq Y.\CL \subseteq Y.\DEL$.
We need only show idempotency.
Readily, $Y.\CL \subseteq Y.\CL.\CL$.
By Prop. \ref{p.CLO.REG}, $Y.\CL.\DEL \subseteq Y.\DEL$.
So, $Y.\CL.\CL = \bigcup_{Z \subseteq Y.\CL.\Delta} \{ Z | Z.\DEL \subseteq Y.\CL.\DEL \}
\subseteq \bigcup_{Z \subseteq Y.\Delta} \{ Z | Z.\DEL \subseteq Y.\DEL \} = Y.\CL$.
\qed
\end{proof}
% #endif
\medskip

\begin{samepage}
\begin{proposition}\label{p.REG.GEN}
$X$ is a $\REG$-generator of $Y.\REG$ if and only if $X$ is a $\CL_{\Delta}$-generator of $Y.\CL_{\Delta}$.
\end{proposition}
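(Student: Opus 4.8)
The plan is to unwind the definition of ``generator'' and reduce the biconditional to the single equivalence: $X.\REG = Y.\REG$ if and only if $X.\CL_{\Delta} = Y.\CL_{\Delta}$ (recall that in this section $\REG$ and $\DEL$ denote the same expansive, monotone operator). Since ``$X$ is a $\REG$-generator of $Y.\REG$'' means precisely $X.\REG = Y.\REG$, and ``$X$ is a $\CL_{\Delta}$-generator of $Y.\CL_{\Delta}$'' means precisely $X.\CL_{\Delta} = Y.\CL_{\Delta}$, the whole statement is exactly this equivalence, and I would prove the two implications separately.

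For the forward direction I would argue directly from the defining equation (\ref{RCDEF}). The right-hand side of (\ref{RCDEF}) refers to $Y$ only through $Y.\REG$: the union ranges over $Z \subseteq Y.\REG$, and the selection test is $Z.\REG \subseteq Y.\REG$. Hence $Y.\CL_{\Delta}$ is a function of $Y.\REG$ alone. So if $X.\REG = Y.\REG$, substituting into (\ref{RCDEF}) produces literally the same set on both sides, giving $X.\CL_{\Delta} = Y.\CL_{\Delta}$ with no further work.

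For the converse I would invert the closure by applying $\REG$. Assuming $X.\CL_{\Delta} = Y.\CL_{\Delta}$, these are equal sets, so applying the operator $\REG$ to each yields $X.\CL_{\Delta}.\REG = Y.\CL_{\Delta}.\REG$. Now Proposition \ref{p.CLO.REG} asserts $W.\CL_{\Delta}.\REG = W.\REG$ for every $W$, so the left side collapses to $X.\REG$ and the right side to $Y.\REG$, whence $X.\REG = Y.\REG$.

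The proof has no hard computational core; the only thing to get right is the choice of lever for the converse. The key observation is that Proposition \ref{p.CLO.REG} lets one recover the region $W.\REG$ from the dominated closure $W.\CL_{\Delta}$, which is exactly what is needed to run the argument backwards. I would double-check that Proposition \ref{p.CLO.REG} is stated for all $W$ (it is), so that both $X$ and $Y$ qualify, and confirm that the two instances of definition (\ref{RCDEF}) for $X.\CL_{\Delta}$ and $Y.\CL_{\Delta}$ genuinely depend on nothing but $X.\REG$ and $Y.\REG$ respectively.
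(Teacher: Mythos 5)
Your proof is correct, and your forward direction coincides with the paper's: both observe that the right-hand side of (\ref{RCDEF}) refers to $Y$ only through $Y.\REG$, so $X.\REG = Y.\REG$ immediately gives $X.\CL_{\Delta} = Y.\CL_{\Delta}$. In the converse you genuinely diverge. The paper argues by contradiction: assuming $X.\REG \neq Y.\REG$, it takes a discrepancy set $Z_0 = X.\REG \NOT Y.\REG$ (or the reverse difference) and notes $Z_0 \not\subseteq Y.\CL_{\Delta}$ since $Y.\CL_{\Delta} \subseteq Y.\REG$. You instead apply $\REG$ to the assumed equality $X.\CL_{\Delta} = Y.\CL_{\Delta}$ and collapse both sides with Proposition \ref{p.CLO.REG} ($W.\CL_{\Delta}.\REG = W.\REG$ for all $W$), getting $X.\REG = Y.\REG$ directly. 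Your route is not only shorter but tighter: to complete the paper's contradiction one must know that $Z_0$ lies inside $X.\CL_{\Delta}$ (so that equality of the two closures is actually violated), and this does not follow from $Z_0 \subseteq X.\REG$, because in general $X.\REG \not\subseteq X.\CL_{\Delta}$ --- the paper's own integer example, where $2 \in \{1\}.\REG$ but $\{2\}.\REG \not\subseteq \{1\}.\REG$, shows that points of the region can fail the membership test in (\ref{RCDEF}). Your use of Proposition \ref{p.CLO.REG} as the lever for inverting $\CL_{\Delta}$ sidesteps that delicate step entirely; the only dependency you inherit is that proposition itself, which the paper states for all sets, so both instances you need ($W = X$ and $W = Y$) are covered. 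What the paper's approach buys, in principle, is independence from Proposition \ref{p.CLO.REG}; what yours buys is a two-line argument with no case split and no unjustified containment.
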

\end{samepage}
% #ifndef 10
% #define	10 0
% #endif
% #if 10 > 7
\noindent
\begin{proof}
Suppose $X$ is a $\REG$-generator of $Y.\DEL$, so $X.\REG = Y.\REG$.
By (\ref{RCDEF}),
$X.\CL_{\Delta} = \bigcup_{Z \subseteq X.\REG} \{ Z.\REG \subseteq X.\REG \}$
= $\bigcup_{Z.\subseteq Y.\Delta} \{Z.\REG \subseteq Y.\REG \}$ = $Y.\CL_{\Delta}$.
\\
Conversely, let $X$ be a $\CL_{\Delta}$-generator of $Y.\CL_{\Delta}$ and assume
$X$ is not a $\REG$-generator of $Y.\REG$, so $X.\REG \neq Y.\REG$.
Let $Z_0 = X.\REG \NOT Y.\REG$ (or else $Y.\REG \NOT X.\REG$).
$Z_0 \not\subseteq Y.\REG$ implies
$Z_0 \not\subseteq \bigcup_{Z \subseteq Y.\Delta}\{Z.\REG \subseteq Y.\REG \} = Y.\CL_{\Delta}$
contradicting assumption that $X$ is a $\CL_{\Delta}$-generator of $Y.\CL_{\Delta}$.
\qed
\end{proof}
% #endif

\begin{samepage}
\begin{proposition}\label{p.C.REG}
A dominating operator, $\REG$, is itself a closure operator, $\CL$, if
and only if $X \subseteq Y.\REG$ implies $X.\REG \subseteq Y.\REG$.
\end{proposition}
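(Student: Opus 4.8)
The plan is to prove the biconditional by isolating idempotency as the only gap between a dominating operator and a closure operator: since $\REG$ is assumed dominating, it is already expansive and monotone, so in each direction I need only relate the stated implication to the identity $Y.\REG.\REG = Y.\REG$.

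For the forward direction I would assume $\REG$ is a closure operator, hence idempotent, and take an arbitrary pair with $X \subseteq Y.\REG$. Monotonicity gives $X.\REG \subseteq (Y.\REG).\REG = Y.\REG.\REG$, and idempotency collapses the right-hand side to $Y.\REG$, so $X.\REG \subseteq Y.\REG$ follows immediately. No choice of witness is required here; the implication holds for every admissible $X, Y$.

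For the converse I would assume the implication and derive idempotency. Expansivity already supplies $Y.\REG \subseteq Y.\REG.\REG$, so only the reverse inclusion is at issue. The key move is to instantiate the hypothesis at the self-substitution $X = Y.\REG$: the premise $X \subseteq Y.\REG$ then holds trivially as an equality, and the conclusion reads $(Y.\REG).\REG = Y.\REG.\REG \subseteq Y.\REG$. Combining the two inclusions yields $Y.\REG.\REG = Y.\REG$ for all $Y$, and since $\REG$ was already expansive and monotone, it is a closure operator $\CL$.

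The proof is essentially routine; the only step that requires any insight is the self-substitution $X = Y.\REG$ in the converse, which converts the quantified implication directly into the idempotency equation. Everything else is a one-line appeal to the defining properties of a dominating operator, so I expect no genuine obstacle beyond recognizing that instantiation.
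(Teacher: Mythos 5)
Your proof is correct, and it is both cleaner and more complete than the paper's own argument, so a comparison is worthwhile. The paper likewise reduces everything to idempotency, but it handles the direction ``condition implies closure'' by identifying $Y.\REG.\REG$ with the dominated-closure union $\bigcup_{X \subseteq Y.\REG} \{ X \mid X.\REG \subseteq Y.\REG \}$ from equation (\ref{RCDEF}): under the hypothesis every $X \subseteq Y.\REG$ qualifies, so the union collapses to $Y.\REG$. Your self-substitution $X = Y.\REG$ extracts exactly the one member of that union that matters, giving $Y.\REG.\REG \subseteq Y.\REG$ directly with expansivity supplying the reverse inclusion; this is more elementary, and it quietly repairs the paper's weakest step, since the opening identity $Y.\REG.\REG = \bigcup_{X \subseteq Y.\REG}\{X \mid X.\REG \subseteq Y.\REG\}$ is not justified as written --- making it rigorous requires precisely your observation that $Y.\REG$ itself is among the admissible $X$. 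For the other direction the paper says only ``Conversely, if $\REG$ is idempotent, it is a closure operator by definition,'' which restates the definition but never derives the implication $X \subseteq Y.\REG \Rightarrow X.\REG \subseteq Y.\REG$ from closedness; your one-line argument $X.\REG \subseteq (Y.\REG).\REG = Y.\REG$ via monotonicity and idempotency supplies exactly the step the paper omits. What the paper's phrasing buys in exchange is a visible link to the dominated closure $\CL_{\Delta}$ of Section \ref{RC1M} --- the stated condition says precisely that $Y.\CL_{\Delta} = Y.\REG$ --- but as a self-contained proof of the proposition, yours is the sounder one.
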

\end{samepage}
% #ifndef 10
% #define	10 0
% #endif
% #if 10 > 7
\noindent
\begin{proof}
Assume the condition holds.
Since $\REG$ is monotone, expansive we need only show $\REG$ is idempotent.
But readily, $Y.\REG.\REG = \bigcup_{X \subseteq Y.\Delta} \{ X | X.\REG \subseteq Y.\REG \}
= \bigcup \{ X \subseteq Y.\REG \} = Y.\REG$.
\\
Conversely, if $\REG$ is idempotent, it is a closure operator by definition.
\qed
\end{proof}
% #endif
\medskip

Several set systems have the property that $X \subseteq Y.\REG$
implies $X.\REG \subseteq Y.\REG$.
% These include
% all matroids
% \cite{Die89,Ede82,Jam96}
% where $\REG$, or $\CL$, is called the ``spanning operator'', as well as
% the antimatroid convex geometries
% \cite{Cop98,EdeJam85,EdeSak88,Mor01}
% and path closures
% \cite{Pfa96}.
% The characteristic of antimatroid closure is that all closed sets have a 
% minimal generator
% \cite{FarJam86,Pfa96}.
% This is not true for matroids.
Let $P$ be a partially ordered set and let $Y.\REG = \{ x | x \leq y \in Y \}$.
Readily $X \subseteq Y.\REG$ implies $X.\REG \subseteq Y.\REG$, so
$\REG$ is a closure operator, $\CL$.
It has been called ``downset'' closure
\cite{PfaSla13,Pfa08}.
The maximal elements in $Y$ constitute a unique generator; it is antimatroid.

% \noindent
% {\bf [Develop theme that linear transformation over a set of $n$-vectors is
% a $\REG$-operator ?]}
% \medskip
% 
% \noindent
% {\bf [Does the condition of Prop. \ref{p.C.REG} 
% completely characterize matroid and antimatroid closure?]}
% \medskip

One can use $\REG$ to construct a large variety of closure systems $\CALS$,
As an example, let $P$ be any set, which we will augment with a special element, $*$.
Let $Y \subseteq P$, and define $Y.\REG = Y \cup \{*\}$ and let $\{*\}.\REG = \{*\}$.
Then $Y.\CL = Y \cup \{*\}$ and $\{*\}.\CL = \{*\}$.
One can optionally let $\EMPTYSET.\CL = \{*\}$ or $\EMPTYSET.\CL = \EMPTYSET$.
It is apparent that $X \subseteq Y.\REG$ implies $X.\REG \subseteq Y.\REG$, so
$\REG$ is a closure operator.
No subset of $P$ is closed, and $Y \subseteq P$ is the unique minimal generator of the 
closed set of the form $Y \cup \{*\}$.
Either $\EMPTYSET$ or $\{*\}$ could be the minimal generator of $\{*\}$.
In either case, $\CALS$ is an antimatroid closure space.
We call $\CALS$, so defined, a ``star space''.

For a simple example where $X \subseteq Y.\REG$ need not imply $X.\REG \subseteq Y.\REG$,
consider $S$ = {\bf Z}, the integers. 
Define $\{y\}.\REG = \{ x \leq y |$ if $y$ is even \} and $\{ z \geq y |$ if $y$ is odd \}.
Let $Y = \{1, 2\}$.
Readily, $2 \in \{1\}.\REG = \{i | i \geq 1\}$, but $\{2\}.\REG = \{j | j \leq 2\} \not\subseteq \{1\}.\REG$.
\section {Transformations} \label{T}

A {\bf transformation} $f$ is a function that maps the sets of one
set system $\CALS$ into another set system $\CALS'$.
If $\CALS' = \CALS$, then a transformation is just an operator on $\CALS$.
More often, however, $\CALS'$ has a different internal structure than $\CALS$.
Frequently, $f$ represents ``change'' in a dynamic set system.
Because the domain, and codomain, of a transformation $f$ is a collection of sets,
including the empty set $\EMPTYSET$, an expression such as $Y.f = \EMPTYSET'$
is well defined.
Similarly one can have $\EMPTYSET.f = Y'$.\footnote
	{
	Normally, we do not distinguish between $\EMPTYSET$ and
	$\EMPTYSET'$.
	The empty set is the empty set. 
	We do so here only for emphasis.
	}
Thus we have a functional notation for sets entering, or leaving, a set system altogether.

\subsection {Monotone Transformations} \label{MT}

A transformation $\CALS \FTRANS \CALS'$ is said to be {\bf monotone} if
$X \subseteq Y$ in $\CALS$ implies $X.f \subseteq Y.f$ in $\CALS'$.
Monotonicity seems to be absolutely basic to transformations and is
assumed throughout this paper.
No other property is.
Monotonicity ensures that if $Y.f = \EMPTYSET$ then
for all $X \subseteq Y$, $X.f = \EMPTYSET'$,
and if $\EMPTYSET.f = Y'$ then
for all $Z' \subseteq Y'$, $\EMPTYSET.f = Z'$, so $Z'.f^{-1} = \EMPTYSET$.
Readily,
\begin{samepage}
\begin{proposition}\label{p.MONOTONE}
The composition $f \COMP g$ of monotone transformations is monotone. 
\end{proposition}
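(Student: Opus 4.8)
The plan is to unwind the definition of monotonicity for the composite and reduce the claim to two successive applications of the hypothesis. Write $\CALS \FTRANS \CALS'$ and $\CALS' \GTRANS \CALS''$ for the two given monotone transformations, so that by the suffix convention the composite acts on the sets of $\CALS$ by the rule $Y.(f \COMP g) = (Y.f).g$. The only thing to verify is that $X \subseteq Y$ in $\CALS$ forces $X.(f \COMP g) \subseteq Y.(f \COMP g)$ in $\CALS''$.

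First I would fix $X \subseteq Y$ in $\CALS$ and apply the monotonicity of $f$ to obtain the intermediate inclusion $X.f \subseteq Y.f$ in $\CALS'$. The key point is that this inclusion lives in the codomain $\CALS'$, which is precisely the domain of $g$; hence the two sets $X.f$ and $Y.f$ are legitimate inputs to $g$. Applying the monotonicity of $g$ to them yields $(X.f).g \subseteq (Y.f).g$, which by the composition rule is exactly $X.(f \COMP g) \subseteq Y.(f \COMP g)$. This completes the argument.

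There is no real obstacle here: the proof is just a two-link chaining of the defining implication, and the only thing demanding any care is the bookkeeping of the suffix notation together with the fact that composition routes $\CALS$ through $\CALS'$ to $\CALS''$, so that each monotonicity hypothesis is invoked on the correct set system. The degenerate cases noted earlier — for instance $Y.f = \EMPTYSET'$ or $\EMPTYSET.f = Y'$ — require no separate treatment, since monotonicity as stated already subsumes them and the empty set behaves as the least element under $\subseteq$ throughout.
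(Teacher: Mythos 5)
Your proof is correct: chaining the two monotonicity hypotheses through the intermediate system $\CALS'$ is exactly the standard argument, which the paper itself deems immediate (it states the proposition with ``Readily'' and omits any proof). Nothing is missing, and your attention to the suffix notation and the empty-set cases is sound but, as you note, not strictly necessary.
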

\end{samepage}
% Consequently it is easy to show that the collection of all monotone transformations
% forms a concrete category {\bf M}.

% \subsection {$\OPER$-Preserving Maps} \label{APM}
% 
% A transformation $\CALS \FTRANS \CALS'$ is said to be {\bf $\OPER$-preserving}\footnote
% 	{
% 	Traditionally a function that takes open sets onto open sets is said to be ``closed''.
% 	But using this term in this context where ``closure'' (using a different definition) is a 
% 	major theme is just too confusing.
% 	}
% if $Y.\OPER.f = Y.f.\OPER'$.
% An $\OPER$-preserving map takes $\OPER$-sets onto $\OPER$-sets.

% \subsection {Role of Monotonicity in Transformations} \label{RMT}

% The following sequence of propositions illustrates the importance of monotonicity
% when studying the behavior of operators under transformation.
% For example, in Section \ref{MT}, we observed that it is possible to have $Y.f = \EMPTYSET$,
% monotonicity ensures that for all $X \subseteq Y$, $X.f = \EMPTYSET'$,
% and when $\EMPTYSET.f = Y'$
% we also have for all $Z' \subseteq Y'$, $Z' \subseteq \EMPTYSET.f$, so $Z'.f^{-1} = \EMPTYSET$.

\subsection {Continuous Transformations} \label{CT}

A transformation $\CALS \FTRANS \CALS'$ is said to be {\bf continuous}
with respect to an operator, $\OPER$, or more simply $\OPER$-continuous,
if for all sets $Y \in \CALS$, $Y.\OPER.f \subseteq Y.f.\OPER'$,
\cite{Cas03,Ore46,PfaSla13,Sla04,Sla10}.
In the referenced literature, continuity is only considered with respect
to a closure operator, $\CL$.
This is reasonable; but as the following propositions show, it can be generalized.

\begin{samepage}
\begin{proposition}\label{p.COP}
Let $\OPER$ be any monotone operator.
If $\CALS \FTRANS \CALS'$ and $\CALS' \GTRANS \CALS''$ are monotone 
$\OPER$-continuous transformations then
$\CALS \FGTRANS \CALS''$ is $\OPER$-continuous
\end{proposition}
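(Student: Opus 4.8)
The plan is to unfold the definition of $\OPER$-continuity for the composite transformation and chase the two given inclusions through, using the monotonicity of $g$ as the bridge between them. By definition, $\CALS \FGTRANS \CALS''$ is $\OPER$-continuous precisely when $Y.\OPER.(f \COMP g) \subseteq Y.(f \COMP g).\OPER''$ holds for every $Y \in \CALS$. Since transformations are written in suffix notation, $Y.(f \COMP g) = Y.f.g$, so the goal reduces to establishing the single containment $Y.\OPER.f.g \subseteq Y.f.g.\OPER''$.

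First I would apply the $\OPER$-continuity of $f$, which gives $Y.\OPER.f \subseteq Y.f.\OPER'$. Next, applying $g$ to both sides and invoking the monotonicity of $g$ (part of the hypothesis) yields $Y.\OPER.f.g \subseteq Y.f.\OPER'.g$. Then I would apply the $\OPER$-continuity of $g$, with the set $Y.f \in \CALS'$ playing the role of the source argument, obtaining $Y.f.\OPER'.g \subseteq Y.f.g.\OPER''$. Chaining these two inclusions gives $Y.\OPER.f.g \subseteq Y.f.g.\OPER''$, which is exactly the required $\OPER$-continuity of $f \COMP g$.

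The argument is essentially a one-line containment chase, so I do not expect a serious obstacle; the only point needing care is that monotonicity must be genuinely used, not merely assumed in passing, in order to transport the first inclusion across the second map $g$. Without the monotonicity of $g$, the containment $Y.\OPER.f \subseteq Y.f.\OPER'$ would not propagate to $Y.\OPER.f.g \subseteq Y.f.\OPER'.g$, so the requirement that both transformations be monotone is precisely what makes the composite continuity go through. For completeness I would also remark that $f \COMP g$ is itself monotone by Proposition~\ref{p.MONOTONE}, so the conclusion is consistent with the monotone-transformation framework established in this section.
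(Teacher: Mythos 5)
Your proposal is correct and follows exactly the same three-step containment chase as the paper's proof: apply $f$'s continuity, push the inclusion through $g$ via monotonicity, then apply $g$'s continuity to $Y.f$. Your additional remarks on the necessity of $g$'s monotonicity match the paper's own commentary (it cites a counterexample from \cite{PfaSla13} showing monotonicity of $g$ is needed), so nothing is missing.
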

\end{samepage}
% #ifndef 10
% #define	10 0
% #endif
% #if 10 > 7
\noindent
\begin{proof}
Since $f$ is continuous w.r.t. $\OPER$, $Y.\OPER.f \subseteq Y.f.\OPER'$.
Since $g$ is monotone, $Y.\OPER.f.g \subseteq Y.f.\OPER'.g$.
And finally $g$ $\OPER$-continuous yields
$Y.\OPER.f.g \subseteq Y.f.\OPER'.g \subseteq Y.f.g.\OPER''$.
\qed
\end{proof}
% #endif
\medskip

\noindent
That the composition of $\OPER$-continuous transformations 
is continuous when $\OPER$ is a closure operator has already been shown in
\cite{PfaSla13}, where 
a counter example is provided to demonstrate the necessity of having $g$ be monotone.
They also show that
the collection of all monotone, $\CL$-continuous transformations
forms a concrete category, $\MCONT$,
\cite{PfaSla13}.
By Proposition \ref{p.CLO.REG}
$Y.\CL.\REG = Y.\REG \subseteq Y.\REG.\CL$, so $\REG$ is monotone, $\CL$-continuous,
and thus a member of $\MCONT$.
And since $\CL$ is idempotent, $\CL$ is trivially $\CL$-continuous, as well.

\begin{proposition}\label{p.IC4}
Let $\REG$ be a dominating operator and let
$\CALS \FTRANS \CALS'$ be monotone.
Then $f$ is $\REG$-continuous if and only if
$X.\DEL = Y.\DEL$ implies  $X.f.\DEL' = Y.f.\DEL'$.
\end{proposition}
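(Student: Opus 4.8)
The plan is to prove the two implications by playing the single inclusion that $\REG$-continuity provides, $Y.\DEL.f \subseteq Y.f.\DEL'$, against the expansivity of $\DEL$ and the monotonicity of $f$. Throughout I write $\DEL$ for the dominating operator on $\CALS$ and $\DEL'$ for its counterpart on $\CALS'$, and I use freely that $W \subseteq W.\DEL$, that $X \subseteq Y$ forces $X.f \subseteq Y.f$, and that $\DEL'$ is monotone.

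For the forward implication, assume $f$ is $\REG$-continuous and suppose $X.\DEL = Y.\DEL$. Since $X \subseteq X.\DEL = Y.\DEL$, monotonicity of $f$ gives $X.f \subseteq Y.\DEL.f$, and continuity gives $Y.\DEL.f \subseteq Y.f.\DEL'$; hence $X.f \subseteq Y.f.\DEL'$. Applying $\DEL'$ to the smaller side I would then obtain $X.f.\DEL' \subseteq Y.f.\DEL'$ (the delicate step, discussed below), and the identical computation with $X$ and $Y$ interchanged gives $Y.f.\DEL' \subseteq X.f.\DEL'$. Together these yield the required equality $X.f.\DEL' = Y.f.\DEL'$.

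For the converse, assume the right-hand condition and fix $Y$; I must recover $Y.\DEL.f \subseteq Y.f.\DEL'$. The idea is to compare $Y$ with its own region $Y.\DEL$: these two sets have the same $\DEL$-image, so the hypothesis applies to the pair and gives $(Y.\DEL).f.\DEL' = Y.f.\DEL'$. Expansivity of $\DEL'$ then yields $Y.\DEL.f \subseteq (Y.\DEL).f.\DEL' = Y.f.\DEL'$, which is exactly $\REG$-continuity at $Y$.

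I expect the main obstacle to lie in two twin steps: passing from $X.f \subseteq Y.f.\DEL'$ to $X.f.\DEL' \subseteq Y.f.\DEL'$ in the forward direction, and the claim that $Y$ and $Y.\DEL$ carry the same $\DEL$-image in the converse. Both are manifestations of the absorption property ``$A \subseteq B.\DEL$ implies $A.\DEL \subseteq B.\DEL$'' of Proposition \ref{p.C.REG} --- equivalently, the idempotent behaviour of the operator on the images in play (cf. Proposition \ref{p.CLO.REG}). Continuity on its own delivers only one inclusion, so it is precisely this closure-like absorption that must be marshalled to promote the inclusions to the stated equalities, and confirming that it is available is where I would concentrate the argument.
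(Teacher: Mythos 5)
Your proposal is not a proof: the two steps you yourself flag as ``delicate'' are fatal under the stated hypotheses, and deferring them leaves the argument open at exactly the load-bearing points. The absorption property ``$A \subseteq B.\DEL$ implies $A.\DEL \subseteq B.\DEL$'' is, by Proposition \ref{p.C.REG}, \emph{equivalent} to $\DEL$ being a closure operator, and the claim in your converse that $Y$ and $Y.\DEL$ have the same $\DEL$-image is precisely idempotency, which the paper itself warns fails in general ($Y.\DEL \subset Y.\DEL.\DEL$, Section \ref{RC1M}). So ``confirming that it is available'' cannot succeed for a mere dominating operator, and no detour rescues the forward implication, because it is false as stated. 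Take $S = \{x,y\}$ with $\{x\}.\DEL = \{y\}.\DEL = \{x,y\} = \{x,y\}.\DEL$ and $\EMPTYSET.\DEL = \EMPTYSET$; take $S' = \{1,2,3\}$ with $\DEL'$ extended from $\{1\}.\DEL' = \{1,2\}$, $\{2\}.\DEL' = \{1,2,3\}$, $\{3\}.\DEL' = \{3\}$ (expansive, monotone, not idempotent); and let $f$ be the monotone map $\{x\}.f = \{1\}$, $\{y\}.f = \{2\}$, $\{x,y\}.f = \{1,2\}$, $\EMPTYSET.f = \EMPTYSET$. Then $f$ is $\DEL$-continuous ($\{x\}.\DEL.f = \{1,2\} \subseteq \{1\}.\DEL'$; $\{y\}.\DEL.f = \{1,2\} \subseteq \{2\}.\DEL' = \{1,2,3\}$; $\{x,y\}.\DEL.f = \{1,2\} \subseteq \{1,2\}.\DEL' = \{1,2,3\}$), yet $\{x\}.\DEL = \{y\}.\DEL$ while $\{x\}.f.\DEL' = \{1,2\} \neq \{1,2,3\} = \{y\}.f.\DEL'$. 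Concretely, the promotion step fails with the roles of $X,Y$ as $\{y\},\{x\}$: $\{y\}.f = \{2\} \subseteq \{1,2\} = \{x\}.f.\DEL'$, yet $\{y\}.f.\DEL' = \{1,2,3\} \not\subseteq \{1,2\}$.

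For what it is worth, the paper's own proof conceals the same two gaps rather than closing them: in the forward direction it asserts that $Y.f.\DEL'$ is ``the smallest $\DEL$-set containing $X.f$,'' which is a closure-operator fact with no analogue for general dominating operators, and in the converse its choice $X \in Y.\REG\INV$ only produces a violation of the right-hand condition if $Y.\DEL$ is $\DEL$-equivalent to $Y$ --- idempotency again. So your diagnosis locates precisely where the paper's argument is thin; the honest conclusion is that Proposition \ref{p.IC4} requires $\DEL$ and $\DEL'$ to be closure operators (as in the literature the paper cites, where continuity is defined relative to $\CL$). Under that stronger hypothesis both of your deferred steps become immediate --- absorption from monotonicity together with $B.\CL.\CL = B.\CL$, and $Y.\CL.\CL = Y.\CL$ directly --- and your argument then goes through completely, and is, if anything, more carefully structured than the paper's.
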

\begin{proof}
Let $f$ be $\REG$-continuous, and let
$X.\REG = Y.\REG$, so
$X =_{\REG} Y$.
By monotonicity and continuity, $X.f \subseteq X.\REG.f = Y.\REG.f \subseteq Y.f.\REG'$.
Similarly, $Y.f \subseteq X.f.\REG'$.
Since $Y.f.\REG'$ is the smallest $\REG$-set containing $X.f$ and
$X.f.\REG'$ is the smallest $\REG$-set containing $Y.f$,
$X.f.\REG' = Y.f.\REG'$.
\\
Conversely, assume $f$ is not $\REG$-continuous.
So there exists $Y$ with $Y.\REG.f \not\subseteq Y.f.\REG'$
Let $X \in Y.\REG\INV$.
$X.f \subseteq X.\REG.f = Y.\REG.f \not\subseteq Y.f.\REG'$,
so $X.f.\REG' \neq Y.f.\REG'$,
contradicting the condition.
\qed
\end{proof}
\medskip

Thus, the image of a generator under a continuous transformation is again a generator.
However, if
$X \in Y.\REG\INV$ is a minimal generator,
Proposition \ref{p.IC4} only shows that $X.f$ is still a generator of
$Y.f.\REG'$;  it need not be minimal.

A transformation $S \FTRANS S'$ is {\bf $\REG$-surjective} if for all $\REG$-sets $Y'$,
there exists a set $Y\subseteq S$ such that $Y.f = Y'$.
\begin{samepage}
\begin{proposition}\label{p.IC2}
Let $f$ be monotone, $\REG$-continuous and $\REG$-surjective, then for all $\REG$-sets
$Y'$ in $\CALS'$, there exists a $\REG$-set $Y$ in $\CALS$ such that
$Y.f = Y'$.
\end{proposition}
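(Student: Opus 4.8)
The plan is to take any preimage of $Y'$ supplied by $\REG$-surjectivity and promote it to a $\REG$-set without disturbing its image. Fix a $\REG$-set $Y'$ in $\CALS'$, so that $Y'.\REG' = Y'$. By $\REG$-surjectivity choose some $Y_0 \subseteq S$ with $Y_0.f = Y'$, and take as a first candidate the region $Y_0.\REG$. Since $\REG$ is expansive we have $Y_0 \subseteq Y_0.\REG$, and monotonicity of $f$ then gives the lower inclusion $Y' = Y_0.f \subseteq Y_0.\REG.f$.

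For the reverse inclusion I would invoke $\REG$-continuity directly: $Y_0.\REG.f \subseteq Y_0.f.\REG' = Y'.\REG' = Y'$, the last equality holding precisely because $Y'$ is a $\REG$-set. Hence $Y_0.\REG.f = Y'$. It remains to arrange that the chosen preimage is \emph{itself} a $\REG$-set. As $\REG$ need not be idempotent, $Y_0.\REG$ need not be fixed by $\REG$; but expansivity yields an ascending chain $Y_0.\REG \subseteq Y_0.\REG.\REG \subseteq \cdots$ which, because $S$ is finite, stabilises after finitely many steps at a set $Y$ with $Y.\REG = Y$. Running the same sandwich at each stage keeps every term's image equal to $Y'$: continuity gives $(Y_0.\REG.\cdots.\REG).\REG.f \subseteq (Y_0.\REG.\cdots.\REG).f.\REG'$, and a short induction (base $Y_0.f = Y'$, step using $Y'.\REG' = Y'$ for the upper bound and expansivity plus monotonicity for the lower bound) shows each image is $Y'$. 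Thus $Y.f = Y'$, and $Y$ is the required $\REG$-set.

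I expect the reverse inclusion to be the crux of the argument: a priori the enlarged preimage $Y_0.\REG$ could map to a strictly larger set than $Y'$, and ruling this out is exactly the role of $\REG$-continuity combined with the hypothesis $Y'.\REG' = Y'$. The only other point needing care is that the preimage be a genuine $\REG$-set rather than merely some generator of $Y'$; since $\REG$ is not assumed idempotent, this is secured by iterating $\REG$ to a fixed point, where the standing finiteness of $S$ is essential.
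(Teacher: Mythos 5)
Your proof is correct, and its first half is exactly the paper's argument: take any preimage $Y_0$ with $Y_0.f = Y'$ supplied by $\REG$-surjectivity and run the sandwich $Y' = Y_0.f \subseteq Y_0.\REG.f \subseteq Y_0.f.\REG' = Y'.\REG' = Y'$, so that $Y_0.\REG.f = Y'$. The paper stops there and offers $Y_0.\REG$ as the required $\REG$-set; you go one step further, iterating $\REG$ and using the finiteness of $S$ (a standing assumption of the paper, so this is legitimate) to obtain a fixed point $Y$ with $Y.\REG = Y$, with a per-stage induction showing every iterate still maps onto $Y'$. That extra step is not idle: it resolves an ambiguity the paper leaves open. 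The paper's own proof invokes the equality $Y'.\REG' = Y'$ on the codomain side --- that is, it reads ``$\REG$-set'' as ``set fixed by $\REG$'' --- yet on the domain side it delivers only $Y_0.\REG$, which, since $\REG$ is not assumed idempotent, is merely a set in the image of $\REG$ and need not satisfy $Y_0.\REG.\REG = Y_0.\REG$. Your stabilized $Y$ is simultaneously fixed by $\REG$ and in its image, so it satisfies the conclusion under either reading of ``$\REG$-set,'' and your identification of the crux is accurate: continuity together with $Y'.\REG' = Y'$ is precisely what rules out the image overshooting $Y'$ when the preimage is enlarged. In short, you use the same key mechanism as the paper but prove a slightly stronger, cleaner statement.
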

\end{samepage}
\noindent
\begin{proof}
Since $f$ is $\REG$-surjective, $\exists Y, Y.f = Y'$.
But, by monotonicity and $\REG$-continuity, $Y.f \subseteq Y.\REG.f \subseteq Y.f.\REG'$ = Y'.
So, $Y.\REG.f = Y'$.
\qed
\end{proof}

\begin{proposition}\label{p.COMP.SUR}
Let $\CALS \FTRANS \CALS'$, $\CALS' \GTRANS \CALS''$
be monotone, $\REG$-continuous transformations.
If both
$f$ and $g$ are $\REG$-surjective, then so is $\CALS \FGTRANS \CALS''$.
\end{proposition}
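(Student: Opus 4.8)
The plan is to chain the two surjectivity hypotheses, pulling a target $\REG$-set back first through $g$ and then through $f$. By the definition of $\REG$-surjectivity, to establish that $\CALS \FGTRANS \CALS''$ is $\REG$-surjective I must produce, for every $\REG$-set $Y''$ in $\CALS''$, some $Y \subseteq S$ with $Y.f.g = Y''$.

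So first I would fix an arbitrary $\REG$-set $Y''$ in $\CALS''$ and pull it back through $g$. The naive step---apply $\REG$-surjectivity of $g$ to obtain $Y'$ with $Y'.g = Y''$---is not quite enough, because that definition delivers only an \emph{arbitrary} preimage $Y'$, whereas the next stage requires $Y'$ to be a $\REG$-set. This is exactly the gap closed by Proposition \ref{p.IC2}: since $g$ is monotone, $\REG$-continuous and $\REG$-surjective, it guarantees a \emph{$\REG$-set} $Y'$ in $\CALS'$ with $Y'.g = Y''$. Now $Y'$ is a legitimate input for the $\REG$-surjectivity of $f$, which yields some $Y \subseteq S$ with $Y.f = Y'$. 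Composing in suffix order gives $Y.(f \COMP g) = Y.f.g = Y'.g = Y''$, so $Y$ is the required preimage and $f \COMP g$ is $\REG$-surjective.

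The only real obstacle is the mismatch between the two definitions: $\REG$-surjectivity of $g$ returns a set that need not be closed under $\REG'$, while $\REG$-surjectivity of $f$ can be invoked only on $\REG$-sets. The entire argument therefore hinges on Proposition \ref{p.IC2} to upgrade the intermediate preimage $Y'$ into a $\REG$-set; once that is in hand, the composition collapses to a single line. Monotonicity and $\REG$-continuity of $f \COMP g$, should one wish to remain inside the ambient category, are supplied separately by Propositions \ref{p.MONOTONE} and \ref{p.COP}, but they play no role in the surjectivity claim itself.
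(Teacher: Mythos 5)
Your proof is correct and takes essentially the same route as the paper: both arguments pull the target $\REG$-set $Y''$ back through $g$, invoke Proposition \ref{p.IC2} to upgrade the intermediate preimage $Y'$ to a $\REG$-set, and then apply the $\REG$-surjectivity of $f$ to finish by composition. Your explicit observation that raw $\REG$-surjectivity of $g$ alone delivers only an arbitrary preimage---so that Proposition \ref{p.IC2} is genuinely needed before $f$'s surjectivity can be invoked---is exactly the step the paper performs, merely spelled out more carefully.
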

\begin{proof}
Because the composition of $\REG$-continuous transformations is
$\REG$-continuous, we need only consider surjectivity.
Let $Y''$ be a $\REG$-set in $\CALS''$.  
Since $g$ is surjective, $\exists Y' \in \CALS'$, $Y'.g = Y''$.
Because, $g$ is continuous we may assume, by Prop. \ref{p.IC2}, that
$Y'$ is an $\REG$-set.
Thus, by surjectivity of $f$, $\exists Y \in \CALS, Y.f = Y'$
Consequently,
$f\COMP g$ is $\REG$-surjective.
\qed
\end{proof}
\medskip

% \subsection {$\OPER$-Preserving Maps} \label{APM}
% 
A transformation $\CALS \FTRANS \CALS'$ is said to be {\bf $\OPER$-preserving}
% \footnote
%	{
%	Traditionally a function that takes open sets onto open sets is said to be ``closed''.
%	But using this term in this context where ``closure'' (using a different definition) is a 
%	major theme is just too confusing.
%	}
if $Y.\OPER.f = Y.f.\OPER'$.
An $\OPER$-preserving map takes $\OPER$-sets onto $\OPER$-sets.

% \noindent
% {\bf [Since surjectivity is closed under composition, do monotone, continuous,
% surjective transformations form an interesting sub category of {\bf MCont}?]}

\begin{samepage}
\begin{proposition}\label{p.AP}
Let $\REG$ be a dominating operator and let $f$ be monotone.
$f$ is $\DEL$-preserving if and only if
for all $Y$, $Y.f.\REG' \subseteq Y.\REG.f$.
% If, in addition, $\REG$ is idempotent, then the converse is true.
\end{proposition}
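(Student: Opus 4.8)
The plan is to read the defining equation of $\DEL$-preservation, $Y.\DEL.f = Y.f.\DEL'$, as the conjunction of its two inclusions and to match each against the stated condition. The condition $Y.f.\REG' \subseteq Y.\REG.f$ is literally the inclusion $Y.f.\DEL' \subseteq Y.\DEL.f$, so the biconditional hinges entirely on the companion inclusion $Y.\DEL.f \subseteq Y.f.\DEL'$ --- which is exactly $\DEL$-continuity of $f$. The forward direction is then immediate: if $f$ is $\DEL$-preserving, the equality $Y.\DEL.f = Y.f.\DEL'$ holds for every $Y$ and in particular yields the stated inclusion, with no appeal to any property of $f$ beyond the defining equation itself.

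For the backward direction I would assume $Y.f.\DEL' \subseteq Y.\DEL.f$ for all $Y$ and try to manufacture the missing inclusion $Y.\DEL.f \subseteq Y.f.\DEL'$, after which the defining equality, hence $\DEL$-preservation, follows at once. The moves available are exactly those licensed by the standing hypotheses. Since $\DEL$ is expansive and $f$ monotone, $Y.f \subseteq Y.\DEL.f$; since $\DEL'$ is expansive, $Y.\DEL.f \subseteq (Y.\DEL.f).\DEL' = Y.\DEL.f.\DEL'$; and the hypothesis may be invoked at the enlarged argument $Y.\DEL$, giving $Y.\DEL.f.\DEL' \subseteq Y.\DEL.\DEL.f$. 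Chaining these yields $Y.\DEL.f \subseteq Y.\DEL.\DEL.f$, so the remaining task is to bridge from this image of the (iterated) region back down to the region of the image, $Y.f.\DEL'$.

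That bridge is where I expect the real difficulty to sit, and it is the single step on which the whole backward direction turns. The inclusion $Y.\DEL.f \subseteq Y.f.\DEL'$ is precisely $\DEL$-continuity, and Proposition \ref{p.IC4} shows continuity is a genuine condition --- equivalent to $X.\DEL = Y.\DEL$ forcing $X.f.\DEL' = Y.f.\DEL'$ --- and not a free consequence of monotonicity. Accordingly, the crux is to extract continuity from the co-continuity hypothesis rather than to posit it: given $X.\DEL = Y.\DEL$, the hypothesis confines both $X.f.\DEL'$ and $Y.f.\DEL'$ inside the common set $X.\DEL.f = Y.\DEL.f$, and I would try to force these two regions to coincide so as to meet the criterion of Proposition \ref{p.IC4}, calling on the closure-like behaviour of $\DEL$ recorded in Propositions \ref{p.CLO.REG} and \ref{p.C.REG} to collapse $Y.\DEL.\DEL.f$ back onto $Y.f.\DEL'$. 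Pinning down exactly what makes this collapse go through --- and thus whether the co-continuity hypothesis alone suffices --- is the delicate, decisive step of the argument, and it is there that I would concentrate the proof.
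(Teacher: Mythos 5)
Your backward direction never closes, and in fact it cannot: under your literal reading of ``$\DEL$-preserving'' as the universal equality $Y.\DEL.f = Y.f.\DEL'$, the biconditional is false for general dominating operators. Take $S = S' = \{a,b,c\}$, $\DEL = \DEL'$ with $\{a\}.\DEL = ab$, $\{b\}.\DEL = bc$, $\{c\}.\DEL = c$, $ab.\DEL = ac.\DEL = abc$, $bc.\DEL = bc$, $abc.\DEL = abc$, $\EMPTYSET.\DEL = \EMPTYSET$; this is expansive and monotone but not idempotent, since $\{a\}.\DEL.\DEL = abc \neq ab$. Let $f$ be the identity on every subset except $ab.f = abc$. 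Then $f$ is monotone and satisfies $Y.f.\DEL' \subseteq Y.\DEL.f$ for every $Y$ (the only nontrivial check is $Y = \{a\}$, where $ab \subseteq abc$), yet $\{a\}.\DEL.f = abc \not\subseteq ab = \{a\}.f.\DEL'$, so the equality fails. Hence no amount of ingenuity at your ``delicate, decisive step'' will extract $\DEL$-continuity from the stated condition; the collapse you hoped to obtain from Propositions \ref{p.CLO.REG} and \ref{p.C.REG} genuinely requires idempotency, which a dominating operator need not have.

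The paper escapes because its proof addresses a different statement from the one you attacked. It reads ``$f$ is $\REG$-preserving'' in the weaker, fixed-point sense announced immediately after the definition (``an $\OPER$-preserving map takes $\OPER$-sets onto $\OPER$-sets''): if $Y = Y.\REG$ then $Y.f = Y.f.\REG'$. Under that reading, the direction you dismissed as immediate is the substantive one, and it follows easily from the condition: for $Y = Y.\REG$ one has $Y.f.\REG' \subseteq Y.\REG.f = Y.f \subseteq Y.f.\REG'$, forcing equality, so $Y.f$ is an $\REG'$-set. For the converse, the paper explicitly inserts the hypothesis ``assume $\REG$ is idempotent'' (which is absent from the statement of the proposition) and then argues $Y.f.\REG' \subseteq Y.\REG.f.\REG' = Y.\REG.f$, applying preservation at the $\REG$-set $Y.\REG = Y.\REG.\REG$. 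Note also that under your equality reading, Corollary \ref{c.AP} would be vacuous, which is independent evidence that the fixed-point sense is the intended one. Your instinct that continuity is exactly the missing half, and that closure-like behaviour would have to be smuggled in to supply it, was correct; but as a proof the proposal is incomplete, and the claim it sets out to prove is, so construed, false --- my counterexample $f$ above is even preserving in the fixed-point sense, since it fixes each of the $\DEL$-sets $\EMPTYSET$, $c$, $bc$, $abc$.
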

\end{samepage}
% #ifndef 10
% #define	10 0
% #endif
% #if 10 > 7
\noindent
\begin{proof}
Assume, $Y.f.\REG' \subseteq Y.\REG.f$.
Let $Y = Y.\REG$, so $Y.f.\REG' \subseteq Y.\REG.f = Y.f$.
Readily $Y.f \subseteq Y.f.\REG'$ so $Y.f = Y.f.\REG'$ and $f$ is $\REG$-preserving
\\
\\
Now assume $\REG$ is idempotent and that $f$ is $\REG$-preserving.
By monotonicity of $f$, $Y \subseteq Y.\REG$ implies $Y.f \subseteq Y.\REG.f$ and
and since $\REG$ is monotone, $Y.f.\REG' \subseteq Y.\REG.f.\REG'$.
Idempotency implies $Y.\REG.\REG = Y.\REG$.
Since $f$ is $\REG$-preserving, $Y.\REG.f.\REG' = Y.\REG.f$ so
$Y.f.\REG' \subseteq Y.\REG.f$.
\qed
\end{proof}
% #endif
\medskip

\noindent
Proposition \ref{p.AP} is also proven in
\cite{PfaSla13}
where $\DEL$ is a closure operator $\CL$.

\begin{samepage}
\begin{corollary}\label{c.AP}
A monotone transformation $f$ is both $\REG$-continuous and $\REG$-preserving,
if and only if $Y.\REG.f = Y.f.\REG'$.
% Conversely, if $\REG$ is idempotent, then $Y.\REG.f = Y.f.\REG'$ implies $f$ is 
% $\REG$-continuous and $\REG$-preserving
\end{corollary}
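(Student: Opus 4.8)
The plan is to observe that the asserted identity $Y.\REG.f = Y.f.\REG'$ is simply the conjunction of two opposite inclusions, and that each inclusion is exactly one of the two named hypotheses. By the definition of $\REG$-continuity the inclusion $Y.\REG.f \subseteq Y.f.\REG'$ holds for every $Y$; by Proposition \ref{p.AP}, for a monotone $f$ the property of being $\REG$-preserving is equivalent to the reverse inclusion $Y.f.\REG' \subseteq Y.\REG.f$ holding for every $Y$. So the corollary is obtained by gluing these two facts together, in each direction.

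First I would prove the forward implication. Assuming $f$ is monotone, $\REG$-continuous and $\REG$-preserving, continuity gives $Y.\REG.f \subseteq Y.f.\REG'$ and Proposition \ref{p.AP} (whose hypotheses, a dominating $\REG$ and a monotone $f$, are exactly those in force) gives $Y.f.\REG' \subseteq Y.\REG.f$; together these force the equality $Y.\REG.f = Y.f.\REG'$ for all $Y$. For the converse I would start from the equality and simply read off its two halves: $Y.\REG.f \subseteq Y.f.\REG'$ is the definition of $\REG$-continuity, while $Y.f.\REG' \subseteq Y.\REG.f$ is, again by Proposition \ref{p.AP}, precisely $\REG$-preservation. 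Hence the equality delivers both properties at once.

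The argument is essentially bookkeeping, so the only thing that requires care is matching each inclusion to the correct hypothesis and invoking Proposition \ref{p.AP} in the right direction each time. The one point I would check explicitly is that the half of Proposition \ref{p.AP} needed in the forward direction --- that $\REG$-preservation yields $Y.f.\REG' \subseteq Y.\REG.f$ --- does not quietly smuggle in an idempotency assumption on $\REG$; since the corollary is stated for a general dominating operator $\REG$, I would make sure this inclusion is available from $\REG$-preservation (together, if necessary, with the continuity already assumed) without requiring $\REG.\REG = \REG$.
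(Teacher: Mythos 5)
Your decomposition is exactly the paper's intended argument: the corollary is printed there with no separate proof, being the immediate conjunction of the definition of $\REG$-continuity (the inclusion $Y.\REG.f \subseteq Y.f.\REG'$) with Proposition \ref{p.AP} (the reverse inclusion), read once in each direction just as you do.

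However, the check you flag at the end does not come out in your favor, and it is the one substantive point here: the paper's proof of the half of Proposition \ref{p.AP} that your forward implication needs literally opens ``Now assume $\REG$ is idempotent and that $f$ is $\REG$-preserving,'' and that idempotency cannot be dropped, even with continuity already in hand. Under the reading of ``$\REG$-preserving'' that the paper's proofs actually use --- $f$ carries $\REG$-sets onto $\REG'$-sets; note that the displayed definition, taken verbatim for all $Y$, would make the corollary a tautology --- consider: $S=\{a,b,c\}$ with $\{a\}.\REG=\{a,b\}$, $\{a,b\}.\REG=\{a,c\}.\REG=\{a,b,c\}$, and every other subset fixed (a dominating, non-idempotent $\REG$); $S'=\{p,q\}$ with $\{p\}.\REG'=\{p,q\}$ and every other subset fixed; and $X.f=\EMPTYSET$ if $a\notin X$, $X.f=\{p\}$ if $a\in X\neq S$, $S.f=\{p,q\}$. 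Then $f$ is monotone; $f$ is $\REG$-continuous (the only nontrivial checks are $\{a\}.\REG.f=\{p\}\subseteq\{p,q\}=\{a\}.f.\REG'$ and $\{a,b\}.\REG.f=\{p,q\}=\{a,b\}.f.\REG'$, similarly for $\{a,c\}$); and $f$ is $\REG$-preserving, since the $\REG$-sets $\EMPTYSET,\{b\},\{c\},\{b,c\},S$ map to $\EMPTYSET$ or $\{p,q\}$, both $\REG'$-sets. Yet $\{a\}.\REG.f=\{p\}\neq\{p,q\}=\{a\}.f.\REG'$. So the forward implication fails for this non-idempotent dominating $\REG$: your plan (which is the paper's plan) goes through exactly when $\REG$ is idempotent, i.e., a closure operator $\CL$ --- the setting of \cite{PfaSla13}, where Proposition \ref{p.AP} is proved --- and your instinct to isolate the hidden idempotency hypothesis has located a genuine defect in the statement of Proposition \ref{p.AP} and its corollary, not a flaw in your bookkeeping.
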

\end{samepage}

\subsection {Galois Connections}  \label{GC}

Monotone transformations $S \FTRANS S'$ can provide another mechanism for defining
closure operators on $S$.

Let $S \FTRANS S'$ and $S' \GTRANS S$ be monotone transformations.
The composite $(f \COMP g)$ is called a {\bf Galois connection} if for all
$X \subseteq S, Y' \subseteq S'$,
\\
\hspace*{0.5in}
(1) $X \subseteq X.f.g$	 \hspace*{0.55in} $(f \COMP g)$ is expansive
\\
\hspace*{0.5in}
(2) $Y'.g.f \subseteq Y'$ \hspace*{0.5in} $(g \COMP f)$ is contractive.

\begin{samepage}
\begin{proposition}\label{p.GC1}
Let $S \FTRANS S' \GTRANS S$ be monotone.
The following are equivalent statements.
\\
\hspace*{0.5in}
(a)
$(f \COMP g)$ is a Galois connection.
\\
\hspace*{0.5in}
(b)
For all $X \subseteq S$ and all $Y' \subseteq S'$, $X.f \subseteq Y'$ if and only if $X \subseteq Y'.g$.
\end{proposition}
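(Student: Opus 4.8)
The plan is to prove the two implications of the biconditional separately, in each case reducing everything to the monotonicity of $f$ and $g$ (which is part of the hypothesis $S \FTRANS S' \GTRANS S$ being monotone) together with the two defining inclusions (1) $X \subseteq X.f.g$ and (2) $Y'.g.f \subseteq Y'$. The whole argument is the standard ``adjunction unit/counit versus hom-set bijection'' equivalence, transcribed into the suffix notation of the paper; the only care required is bookkeeping of which maps are applied on which side.

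For the direction $(a) \Rightarrow (b)$, I would assume the Galois axioms (1) and (2) and prove the two halves of the biconditional. For the forward half, suppose $X.f \subseteq Y'$; applying the monotone transformation $g$ gives $X.f.g \subseteq Y'.g$, and composing this with the expansivity inclusion $X \subseteq X.f.g$ yields $X \subseteq Y'.g$. The converse half is dual: from $X \subseteq Y'.g$ I apply the monotone $f$ to obtain $X.f \subseteq Y'.g.f$, and then the contractivity inclusion $Y'.g.f \subseteq Y'$ gives $X.f \subseteq Y'$. Note that each half genuinely invokes monotonicity of exactly one of the two maps, so the monotonicity hypothesis is doing real work here.

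For the direction $(b) \Rightarrow (a)$, I would recover each Galois axiom by specializing the universally quantified sets in $(b)$ to trivial instances. Taking $Y' = X.f$, the reflexive inclusion $X.f \subseteq X.f$ forces, via $(b)$, the conclusion $X \subseteq X.f.g$, which is precisely expansivity (1). Dually, taking $X = Y'.g$, the reflexive inclusion $Y'.g \subseteq Y'.g$ gives through $(b)$ that $Y'.g.f \subseteq Y'$, which is contractivity (2). I do not expect any genuine obstacle in this proposition: it is purely formal, and the only thing to watch is that the reverse direction uses merely the biconditional and reflexivity of $\subseteq$, whereas the forward direction is where the standing assumption that $f$ and $g$ are monotone is indispensable.
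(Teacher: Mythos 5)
Your proof is correct and follows essentially the same route as the paper's: monotonicity plus the unit/counit inclusions for (a)$\Rightarrow$(b), and the specializations $Y' = X.f$ and $X = Y'.g$ with reflexivity of $\subseteq$ for (b)$\Rightarrow$(a). If anything, your write-up is slightly more explicit than the paper's, which leaves the second specialization implicit behind a ``similarly.''
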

\end{samepage}
\noindent
\begin{proof}
(a) implies (b):\ \  Let $X.f \subseteq Y'$, so $g$ monotone implies $X.f.g \subseteq Y'.g$,
thus $X \subseteq X.f.g \subseteq Y'.g$.
Similarly, $X \subseteq Y'.g$ implies $X.f \subseteq Y.g.f \subseteq Y'$.
\\
(b) implies (a):\ \ 
Let $X.f = Y'$, so trivially $X.f \subseteq Y'$. 
By (b) $X \subseteq Y'.g$ implying $X \subseteq Y'.g \subseteq X.f.g$.
$f \COMP g$ is expansive.
% $\forall X$, $X.f \subseteq X.f = Y'$, so by (b) $X \subseteq X.f.g$.
Similarly, $X \subseteq Y'.g$ implies $Y.g.f \subseteq Y'$.
\qed
\end{proof}

\begin{samepage}
\begin{proposition}\label{p.GC2}
If $S \FTRANS S' \GTRANS S$ is a Galois connection, then
$f$ and $g$ uniquely determine each other.
\end{proposition}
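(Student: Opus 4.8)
The plan is to reduce everything to the adjunction characterization already established in Proposition \ref{p.GC1}(b): the pair $(f \COMP g)$ is a Galois connection precisely when, for all $X \subseteq S$ and all $Y' \subseteq S'$,
\[
X.f \subseteq Y' \quad\text{if and only if}\quad X \subseteq Y'.g .
\]
This single equivalence is enough to pin each map down from the other, because it forces an extremal description: $Y'.g$ must be the \emph{largest} $X$ with $X.f \subseteq Y'$, and dually $X.f$ must be the \emph{smallest} $Y'$ with $X \subseteq Y'.g$. Since a largest (respectively smallest) set with a given property is unique when it exists, uniqueness of the adjoint follows.

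First I would show that $f$ determines $g$. Suppose $g_1$ and $g_2$ are monotone transformations $S' \GTRANS S$ such that both $(f \COMP g_1)$ and $(f \COMP g_2)$ are Galois connections. Fix $Y' \subseteq S'$. Instantiating the equivalence of Proposition \ref{p.GC1}(b) for the pair $(f, g_1)$ at $X = Y'.g_1$ — where $X \subseteq Y'.g_1$ holds trivially — yields $(Y'.g_1).f \subseteq Y'$. Feeding this same inclusion into the equivalence for the pair $(f, g_2)$ gives $Y'.g_1 \subseteq Y'.g_2$. Interchanging the roles of $g_1$ and $g_2$ gives the reverse inclusion, so $Y'.g_1 = Y'.g_2$; as $Y'$ was arbitrary, $g_1 = g_2$. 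The dual direction, that $g$ determines $f$, is symmetric: given candidates $f_1, f_2$ sharing the adjoint $g$, fix $X$ and take $Y' = X.f_1$, so that $X \subseteq (X.f_1).g$ by the equivalence for $(f_1, g)$; applying the equivalence for $(f_2, g)$ forces $X.f_2 \subseteq X.f_1$, and symmetry yields equality.

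The argument is short once Proposition \ref{p.GC1} is in hand, so the main point to watch is avoiding a circular route rather than overcoming a genuine difficulty. The tempting construction $Y'.g = \bigcup\{X \mid X.f \subseteq Y'\}$ would require verifying that this union itself satisfies $(\cdot).f \subseteq Y'$, which does not follow from monotonicity alone; I therefore deliberately sidestep existence and argue purely by comparing two \emph{already-given} adjoints through the biconditional of Proposition \ref{p.GC1}(b). That comparison step — using one adjoint to produce an inclusion and the other to transport it — is the crux, and the symmetry of the biconditional is exactly what lets the same maneuver settle both directions.
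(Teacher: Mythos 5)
Your proof is correct and takes essentially the same approach as the paper: both arguments fix two candidate adjoints, extract the contractive inclusion $Y'.g_1.f \subseteq Y'$ from one connection, use the other connection to conclude $Y'.g_1 \subseteq Y'.g_2$, and then symmetrize (dually for uniqueness of $f$). The only difference is packaging --- you mediate each step through the biconditional of Proposition \ref{p.GC1}(b), whereas the paper chains the raw inequalities directly ($Y'.g \subseteq Y'.g.f.h \subseteq Y'.h$, via expansiveness of $f \COMP h$ and monotonicity of $h$); unfolding your transport step reproduces exactly that chain, so the two proofs coincide up to one level of abstraction.
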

\end{samepage}
\noindent
\begin{proof}
Let $f \COMP g$ be a Galois connection and suppose there exists $h$ such
that $f \COMP h$ is also a Galois connection.
We apply $h$ to $Y'.g.f \subseteq Y'$.
$Y'.g \subseteq Y'.g.f.h$ because $f \COMP h$ is expansive, and
$Y'.g.f.h \subseteq Y'.h$ since $h$ is monotone.
So, $Y.g \subseteq Y'.h$.
Applying $g$ to $Y'.h.f \subseteq Y'$ yields $Y'.h \subseteq Y'.g$,
$\forall Y'$, so $h = g$.
\\
A similar argument shows that $f$ must be unique given $g$.
\qed
\end{proof}

\begin{samepage}
\begin{proposition}\label{p.GC3}
If $S \FTRANS S' \GTRANS S$ is a Galois connection,
then $f \COMP g \COMP f = f$ and $g \COMP f \COMP g = g$.
\end{proposition}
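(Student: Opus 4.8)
The plan is to prove each identity by sandwiching the relevant triple composite between two inclusions --- one supplied by the expansivity of $(f \COMP g)$ and one by the contractivity of $(g \COMP f)$ --- and then closing the gap with monotonicity. In suffix notation the two claims read $X.f.g.f = X.f$ and $Y'.g.f.g = Y'.g$ for all $X \subseteq S$ and all $Y' \subseteq S'$, so it suffices to establish these pointwise.

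For the first identity I would start from expansivity applied to $X$, namely $X \subseteq X.f.g$; since $f$ is monotone this yields $X.f \subseteq X.f.g.f$. For the reverse inclusion I would apply contractivity to the particular set $Y' = X.f \subseteq S'$, which gives $(X.f).g.f \subseteq X.f$, that is $X.f.g.f \subseteq X.f$. The two inclusions together force equality, and since $X$ was arbitrary we obtain $f \COMP g \COMP f = f$.

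The second identity is dual and I would argue symmetrically. Applying expansivity to the set $X = Y'.g \subseteq S$ gives $Y'.g \subseteq (Y'.g).f.g = Y'.g.f.g$; applying contractivity to $Y'$ gives $Y'.g.f \subseteq Y'$, and pushing this through the monotone $g$ yields $Y'.g.f.g \subseteq Y'.g$. Equality again follows, so $g \COMP f \COMP g = g$.

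There is no genuine obstacle here; the only thing requiring care is the bookkeeping of the two opposite inclusions and, in particular, choosing the right argument to feed to each defining property --- the set $X.f$ for the contraction step in the first identity and the set $Y'.g$ for the expansion step in the second --- together with the correct application of monotonicity of $f$ and $g$. I note that both identities also drop out of the biconditional of Proposition \ref{p.GC1}(b), but the direct inclusion argument above is shorter and entirely self-contained.
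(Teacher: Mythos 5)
Your proof is correct and follows essentially the same route as the paper's: both directions are obtained by sandwiching $X.f.g.f$ between the inclusion $X.f \subseteq X.f.g.f$ (expansivity of $f \COMP g$ pushed through the monotone $f$) and $X.f.g.f \subseteq X.f$ (contractivity of $g \COMP f$ applied at $Y' = X.f$), with the dual identity handled symmetrically. If anything, your write-up is cleaner, since you state the contractivity inclusion in the right direction and spell out the dual case that the paper dismisses with ``similarly.''
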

\end{samepage}
\noindent
\begin{proof}
Let $Y' = X.f$.
Since $g \COMP f$ is contractive, $X.f = Y' \subseteq Y'.g.f = X.f.g.f$.
However, $X \subseteq X.f.g$ implies $X.f \subseteq X.f.g.f$ by monotonicity,
so $X .f = X.f.g.f, \forall X$.
\\
Similarly, $Y'.g = Y.g.f.g, \forall Y' \subseteq S'$.
\qed
\end{proof}

\begin{samepage}
\begin{corollary}\label{c.GC4}
If $f \COMP g$ is a Galois connection, then $S \FGTRANS S$ is a closure operator
on $S$.
\end{corollary}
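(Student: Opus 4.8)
The plan is to verify the three defining properties of a closure operator---expansivity, monotonicity, and idempotency---for the composite operator $X \mapsto X.f.g$, pulling each one directly from results already established, so that the corollary is essentially an assembly of the preceding propositions.

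First, expansivity is immediate: it is exactly clause (1) in the definition of a Galois connection, namely $X \subseteq X.f.g$ for all $X \subseteq S$. Nothing further is required. Second, monotonicity follows from Proposition \ref{p.MONOTONE}: both $f$ and $g$ are monotone transformations by hypothesis, and since the composition of monotone transformations is monotone, $f \COMP g$ is monotone. (One should note in passing that $f \COMP g$ is genuinely an operator on $S$, since $f$ maps $S$ into $S'$ and $g$ maps $S'$ back into $S$; this is what makes $S \FGTRANS S$ well defined as an endomap.)

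The only substantive step is idempotency, which amounts to showing $X.f.g.f.g = X.f.g$ for every $X \subseteq S$. Here I would invoke Proposition \ref{p.GC3}, which gives $f \COMP g \COMP f = f$, i.e. $X.f.g.f = X.f$ for all $X$. Reading the fourfold composite $X.f.g.f.g$ as $(X.f.g.f).g$ and applying this identity collapses the inner three maps to a single $f$, yielding $X.f.g$. Thus $(f \COMP g) \COMP (f \COMP g) = f \COMP g$, which is exactly idempotency.

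I do not expect any real obstacle: the genuine content lives in Proposition \ref{p.GC3}, whose proof already exploited the expansive/contractive interplay of the Galois connection, and the corollary merely repackages that identity together with the trivial expansivity and the general monotonicity lemma. If anything deserves a sentence of care, it is only the domain/codomain bookkeeping noted above, ensuring $f \COMP g$ is an operator $S \to S$ so that ``closure operator on $S$'' is meaningful.
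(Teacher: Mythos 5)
Your proof is correct and follows essentially the same route as the paper's: expansivity straight from clause (1) of the Galois connection, monotonicity from Proposition \ref{p.MONOTONE}, and idempotency from the identity $f \COMP g \COMP f = f$ of Proposition \ref{p.GC3}. Your explicit unpacking of the idempotency step ($X.f.g.f.g = (X.f.g.f).g = X.f.g$) is a slightly more detailed rendering of what the paper states in one line, but the argument is identical.
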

\end{samepage}
\noindent
\begin{proof}
Since $f \COMP g$ are Galois connected, $f \COMP g$ is expansive.
$f$ and $g$ are monotone.
Prop. \ref{p.GC3} establishes that $f \COMP g$ is idempotent.
\qed
\end{proof}
\medskip

The preceding three propositions largely follow the development of Galois
connections provided by Castellini in
\cite{Cas03},
except for changed notation.
% that we have assumed $S$ denotes the subsets of power set $2^S$ partially
% ordered by inclusion, whereas he assumes a slightly more general setting where
% the elements of $S$ are pre-ordered
% (only reflexivity and transitivity are needed).

A somewhat different development can be found in Ganter \& Wille
\cite{GanWil99}.
They choose to let $f$ and $g$ be ``anti-monotone'', that is,
$X_1 \subseteq X_2$ implies $X_1.f \supseteq X_2.f$ and
$Y'_1 \subseteq Y'_2$ implies $Y'_1.g \supseteq Y'_2.g$
and 
$X \subseteq X.f.g$, $Y' \subseteq Y'.g.f$.
With this definition of Galois connection, Proposition \ref{p.GC1}
must be rewritten as:
``$f \COMP g$ is a Galois connection if and only if
$Y' \subseteq X.f$ implies $X \subseteq Y'.g$ and conversely''.
% \\
However, both approaches will yield Proposition \ref{p.GC3} and
its corollary.
With the latter definition, both $f \COMP g$ and $g \COMP f$
are expansive operators on $S$, so both are closure operators.
This is important for the subsequent development of ``Formal Concept Analysis''
in \cite{GanWil99}.

We prefer the development presented here, and in 
\cite{Cas03},
because we can compose monotone transformations, so is easy to show that 
\begin{samepage}
\begin{proposition}\label{p.GC5}
Let $S \FTRANS S' \GTRANS S$ and $S' \HTRANS S'' \KTRANS S'$ be Galois
connections,
then $S \FHTRANS S'' \KGTRANS S$ is a Galois connection.
\end{proposition}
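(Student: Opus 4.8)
The plan is to show that the composite pair $f.h : S \to S''$ and $k.g : S'' \to S$ satisfies the definition of a Galois connection. First I would note that both composites are monotone: each is a composition of monotone transformations, so monotonicity follows from Proposition \ref{p.MONOTONE}. With monotonicity in hand, the cleanest route is through the order-theoretic characterization of Proposition \ref{p.GC1}(b) rather than the expansive/contractive definition directly. By that characterization it suffices to prove, for all $X \subseteq S$ and all $Y'' \subseteq S''$, that $X.f.h \subseteq Y''$ if and only if $X \subseteq Y''.k.g$.

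I would obtain this equivalence by chaining the two given ones. Since $h \COMP k$ is a Galois connection, Proposition \ref{p.GC1}(b) applied to the argument $X.f \subseteq S'$ gives $(X.f).h \subseteq Y''$ iff $X.f \subseteq Y''.k$. Since $f \COMP g$ is a Galois connection, the same proposition applied to the argument $Y''.k \subseteq S'$ gives $X.f \subseteq Y''.k$ iff $X \subseteq (Y''.k).g$. Composing these two biconditionals yields $X.f.h \subseteq Y''$ iff $X \subseteq Y''.k.g$, which is exactly condition (b) for the composite, so $(f.h) \COMP (k.g)$ is a Galois connection. (For readers preferring the primitive definition, the same facts verify the inequalities directly: expansivity of $h \COMP k$ gives $X.f \subseteq X.f.h.k$, and pushing through the monotone $g$ followed by expansivity of $f \COMP g$ gives $X \subseteq X.f.g \subseteq X.f.h.k.g$; dually, contractivity of $g \COMP f$ gives $Y''.k.g.f \subseteq Y''.k$, and pushing through the monotone $h$ followed by contractivity of $k \COMP h$ gives $Y''.k.g.f.h \subseteq Y''.k.h \subseteq Y''$.)

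I do not expect a genuine obstacle; the argument is a short chase. The one point demanding care is the bookkeeping forced by the suffix notation: one must keep track of which intermediate system each set inhabits --- that $X.f$ and $Y''.k$ both land in $S'$ --- and invoke each Galois property on its correctly typed argument, remembering to push the single map ($g$ or $h$) through by monotonicity at the intermediate step. This typing discipline is precisely what makes the composable, monotone formulation of Galois connection more convenient here than the anti-monotone one.
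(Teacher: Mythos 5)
Your proof is correct. Note that the paper itself supplies no proof of Proposition \ref{p.GC5}: it is stated as something ``easy to show'' precisely because monotone transformations compose, so your argument fills in a step the author left to the reader. Both of your routes are sound. The chain through the characterization of Proposition \ref{p.GC1}(b) --- $X.f.h \subseteq Y''$ iff $X.f \subseteq Y''.k$ iff $X \subseteq Y''.k.g$, each step applied to the correctly typed argument in $S'$ --- is the cleanest, and together with Proposition \ref{p.MONOTONE} for monotonicity of the composites it yields condition (a) via the (b)$\Rightarrow$(a) direction of Proposition \ref{p.GC1}. Your parenthetical direct verification ($X \subseteq X.f.g \subseteq X.f.h.k.g$ using expansivity of $h \COMP k$ pushed through the monotone $g$, and dually $Y''.k.g.f.h \subseteq Y''.k.h \subseteq Y''$ using contractivity of $g \COMP f$ pushed through the monotone $h$) is presumably the computation the author had in mind, and it is exactly the kind of argument the paper's own proofs of Propositions \ref{p.GC2} and \ref{p.GC3} carry out. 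Your closing remark about tracking which set system each expression inhabits is indeed the only point of care; there is no gap.
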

\end{samepage}
% #ifndef 10
% #define	10 0
% #endif
% #if 10 > 7
% \noindent
% \begin{proof}
% YYY
% \qed
% \end{proof}
% #endif

% The proof of the following proposition is absurdly simple.
% #ifdef 1
% \noindent
% {\tt 'p.GC6'}
% #endif
% \begin{samepage}
% \begin{proposition}\label{p.GC6}
% Let $\REG$ be an expansive, monotone (dominating) operator on $S$.
% There exists a set system $S'$ and transformations $f, g$ such that
% $f \COMP g = \REG$.
% \end{proposition}
% \end{samepage}
% #ifndef 10
% #define	10 0
% #endif
% #if 10 > 7
% \noindent
% \begin{proof}
% Let $S'$ be isomorphic to $S$, except for possibly renaming sets.
% Let $f = \REG$ and let $g$ be the identity morphism.
% Then $f \COMP g = \REG$.
% \qed
% \end{proof}
% #endif
% 
% \noindent
% It becomes far more interesting if we put constraints on the nature
% of the Galois connection.
% 
% \noindent
% {\bf [Suppose we require $f$ or $f \COMP g$ to be continuous?]}

Let $Y.\GEN$ denote the collection of minimal generators of $Y.\DEL$.
If $\DEL$ is uniquely generated, then $Y.\GEN$ is a well defined function.
\begin{samepage}
\begin{proposition}\label{p.UGGC}
If $\DEL$  is uniquely generated, then $\DEL$ and $\GEN$ constitute a Galois connection
on $S$.
\end{proposition}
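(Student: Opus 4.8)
The plan is to realize $\DEL$ and $\GEN$ as the two halves of a Galois connection by taking $f=\GEN$ and $g=\DEL$, and then to verify the two defining inequalities, namely that $\GEN \COMP \DEL$ is expansive and $\DEL \COMP \GEN$ is contractive. First I would record the facts that unique generation hands us for free: $\GEN$ is single valued, $Y.\GEN$ is a subset of $Y$ (so $\GEN$ is contractive), and $(Y.\GEN).\DEL = Y.\DEL$, since $Y.\GEN$ is by construction a generator of $Y.\DEL$; on the other side $\DEL$ is expansive and monotone by hypothesis.

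The expansive inequality is then immediate, because $X.\GEN.\DEL = (X.\GEN).\DEL = X.\DEL \supseteq X$. All the content sits in the contractive inequality $Y.\DEL.\GEN \subseteq Y$. Unwinding the definition, $Y.\DEL.\GEN$ is the unique minimal generator of $Y.\DEL$, and the claim is that this minimal generator already lies inside $Y$. The mechanism I would use is that $Y$ is itself a generator of $Y.\DEL$, together with Proposition \ref{p.FGEN1}: for a uniquely generated (hence antimatroid) operator the generators of a fixed closed set are closed under intersection, so the minimal generator is contained in every generator of that set, in particular in $Y$. Since these are exactly the two inequalities (1) and (2) required of a Galois connection, the proof is then complete; as a consistency check, Corollary \ref{c.GC4} exhibits the induced operator $\GEN \COMP \DEL = \DEL$ as a closure.

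The hard part, which I would isolate as a separate lemma, is justifying that Proposition \ref{p.FGEN1} may be invoked at all: that result concerns a closed set and its generators, so it presupposes that $Y.\DEL$ behaves like a closed set, i.e. that $\DEL$ is idempotent. Thus the real obstacle is to show that a uniquely generated dominating operator on a finite ground set is automatically a closure operator. I would attack this through the criterion of Proposition \ref{p.C.REG}: were $\DEL$ not idempotent there would be a dominated set $Z$ with $Z \subset Z.\DEL = V$ and $V$ a fixed point of $\DEL$; unique generation applied at $V$ yields a single minimal generator $g \subseteq V$ lying inside every generator of $V$, hence inside $Z$, while writing $Z = Y_0.\DEL$ and adjoining to $Y_0$ an element of $V \setminus Z$ should produce a second generator of $V$ that omits part of $g$, contradicting the minimality of $g$. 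Making this last construction airtight when a single application of $\DEL$ adds several new elements at once --- so that one must descend to a coatom of $V$ or induct on $|V|$ --- is the delicate point on which the whole proposition turns. Once idempotency is secured, Proposition \ref{p.FGEN2} identifies $\DEL$ as an antimatroid closure and the verification above closes the argument.
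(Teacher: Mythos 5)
Your skeleton matches the paper's proof exactly: take $f=\GEN$, $g=\DEL$, get expansivity from $X.\GEN.\DEL = X.\DEL \supseteq X$, and reduce everything to the contractive inequality $Y.\DEL.\GEN \subseteq Y$, i.e.\ that the unique minimal generator of $Y.\DEL$ lies inside $Y$. But the detour you build around that last step --- invoking Proposition \ref{p.FGEN1}, observing that it presupposes closed sets, and then proposing a standalone lemma that a uniquely generated dominating operator is automatically idempotent --- manufactures a difficulty that the paper's definitions have already dissolved. The definition of unique generation in Section \ref{OS} reads: $\OPER$ is uniquely generated if for all $Z$, $Y.\OPER = Z$ implies there exists a unique minimal generator $X \subseteq Y$ with $X.\OPER = Z$. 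The containment $X \subseteq Y$ is \emph{part of the definition}, and it holds for every generator $Y$ of $Z$. Since $Y$ is itself a generator of $Y.\DEL$, the hypothesis hands you $Y.\GEN \subseteq Y$ directly --- no antimatroidness, no idempotency, no Proposition \ref{p.FGEN1}. Indeed you recorded precisely this fact as ``free'' in your opening paragraph, and then re-derived it by much heavier (and incomplete) means two paragraphs later. The paper's entire proof is these two lines.

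The gap this creates is concrete: as written, your proof is complete only modulo the lemma that uniquely generated dominating operators are closure operators, and your sketch of that lemma is, by your own admission, not airtight (the descent or induction needed when one application of $\DEL$ adjoins several elements at once is left open). Worse, the lemma inverts the paper's architecture: idempotency is not an input to Proposition \ref{p.UGGC} but its payoff. Immediately after the proposition, the paper applies Corollary \ref{c.GC4} to the expansive composite $\GEN \COMP \DEL = \DEL$ to conclude that $\DEL$ is a closure operator, and only then invokes Proposition \ref{p.FGEN2} to identify it as antimatroid --- the order of deduction you reached for is exactly backwards. Delete the lemma and the appeal to Proposition \ref{p.FGEN1}; contractivity is a one-line consequence of the definition, and closure falls out afterwards rather than going in up front.
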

\end{samepage}
\noindent
\begin{proof}
$Y.\GEN.\DEL = Y.\DEL$, so $\GEN.\DEL$ is expansive.
Because $\DEL$ is uniquely generated, $Y.\GEN \subseteq Y$,
so $Y.\DEL.\GEN$ is contractive.
\qed
\end{proof}
\medskip

\noindent
Hence, by Corollary \ref{c.GC4} any uniquely generated dominating operator
$\DEL$ is a closure operator, and by Proposition \ref{p.FGEN2}, it is 
antimatroid.
If we suppose $\DEL$ is not uniquely generated, say $X$ and $Z$ are minimal
sets such that $X.\DEL = Z.\DEL$, then
$X.\DEL.\GEN = X \cup Z \not\subseteq X$.
$\DEL.\GEN$ is not contractive.

\section {Categorical Closure} \label{CC}

Operators, such as $\OPER, \REG, \CL$, can be naturally regarded
as morphisms in a categorical sense.
In this section we develop this way of approaching domination
and closure.
It is somewhat different from the approach found in 
\cite{Cas03},
but is compatible with it.
The following material has been largely derived from 
\cite{HerStr79,Pie91};
the only substantive differences have been to change notation.
All functions, or morphisms, will still be denoted using postfix notation.
Proposition \ref{p.ANTIMATROID} is, we believe, original.
% Thus the $application$ of a function, $f$, to an element $x$
% of some domain is denoted $x.f$ instead the more traditional
% prefix $f(x)$.
% Similarly, the $composition$ of functions or morphisms is denoted
% by $x.f.g = x.(f \COMP g)$ 
% % (or more commonly just $f \COMP g$) 
% instead of $g(f(x))$.
% This has sometimes been called ``diagrametic'' notation because
% the expressions are read from left to right in precisely the same
% order that one follows the arrows in the diagrams.
% Since a goal of category theory is to be able to reason diagrametically,
% this notational change  would appear to make sense.

\subsection{Review and Examples}

Two categories which are usually presented in category text books are
$\SET$ and $\POSET$.
For us,
the objects of $\SET$ will be all possible finite sets.\footnote
	{
	To avoid logical paradoxes, MacLane,
	\cite{Mac98},
	lets the objects in $\SET$ be all the sets in a fixed universe $U$.
	These he calls $small$ sets.
	Readily, our finite sets are small.
	}
The morphisms of $\SET$ consist of all functions $f:X \ARROW Y$
(or morphisms $X \FTRANS Y$).
with the usual composition
% $(\forall x \in X) [x.(f \COMP g) = x.f.g]$ and the usual identity
% $(\forall x \in X) [x.id_X = x]$.

In the partial order category $\POSET$, the objects consist of all finite sets, $S$,
with a partial order $\leq_S$ on $S$.
Its morphisms are all order preserving (monotone)
functions, $f:S \ARROW T$, such that $x \leq_S y$ implies
$x.f \leq_T y.f$.
Checking that 
% the identity operator $id_S$ preserves the order $\leq_S$, and
$f \COMP g$ is order preserving is usually left as an
exercise.

These two well known categories are paradigms for the dominance and closure 
categories.
Analogous to $\SET$, is the category $\POW$ whose objects consist of
all finite power sets.
Its morphisms are all total
functions $2^S \FTRANS 2^T$ with the usual composition;
$(\forall X \in 2^S) [X.(f\COMP g) = X.f.g]$ and the
usual identity on $2^S$,
$(\forall X \subseteq S) [X.id_S = X]$.
It is not difficult to show this composition is well-defined and associative.
The transformations of Section \ref{T} are morphisms in $\POW$.

The category $\POW$ would appear to be completely isomorphic to $\SET$,
with each $S \in \SET$ replaced by $2^S \in \POW$.
But there are essential differences.
An object $Y$ is said to be {\bf terminal} in $\CAT$
if for every object $X$ in $ \CAT$ there exists exactly one morphism
$X \longrightarrow Y$.
As is well known,
% \cite{Lei14,Mac98},
\cite{Mac98},
every element $x$ is a terminal object of $\SET$, since for
all $S$, the function $f:S \ARROW x$ must be unique.
However, the singleton sets in $\POW$ need not be terminal.
To see this, let $Y = \{x, y\} \in 2^S$ and let $f, g$ be 
extended transformations, where $\{x\}.f = \{x\}$,
$\{y\}.f = \EMPTYSET$ and
$\{x\}.g = \EMPTYSET$, $\{y\}.g = \{x\}$.
Then $S.f = S.g = \{x\}$, but $f \neq g$.
So the singleton elements $\{x\}$ cannot be terminal in $\POW$.
Only $\EMPTYSET$ is terminal in $\POW$, and initial as well.

% With this cautionary example, we examine the category $\POW^{Mor}$
% 
% Each $\POW$-morphism, or operator, $\alpha:2^X \ARROW 2^Y$ is an object in the category
% $\POW^{\ARROW}$. 
% A $\POW^{\ARROW}$ morphism from
% $\alpha:2^X \ARROW 2^Y$ to $\alpha':2^{X'} \ARROW 2^{Y'}$ is a pair
% of morphisms $(a, b)$ such that the diagram
% #ifdef 1
% \noindent
% {\tt POW.ARROW1}
% #endif
% \begin{figure}[ht]
% \centerline{\psfig{figure=fig.pow.arrow1.eps,height=0.8in}}
% \caption{Typical morphism $(a, b)$ in the category $\POW^{Mor}$.
% \label{POW.ARROW1} }
% \end{figure}
% commutes in $\POW$.
% Compare this with Figure \ref{SET.ARROW1}.
% Composition in $\POW^{Mor}$ of
% $f: A \ARROW B) \ABTRANS (f': A' \ARROW B')$ with 
% $f': A' \ARROW B') \ABPTRANS (f'': A'' \ARROW B'')$
% is $(a, b) \COMP (a', b') = (a \COMP a', b \COMP b')$,
% such that the diagram
% #ifdef 1
% \noindent
% {\tt POW.ARROW2}
% #endif
% \begin{figure}[ht]
% \centerline{\psfig{figure=fig.pow.arrow2.eps,height=0.8in}}
% \caption{Composition $(a, b) \COMP (a',b')$ of morphisms in $\POW^{Mor}$.
% \label{POW.ARROW2} }
% \end{figure}
% commutes in $\POW$, that is $a \COMP \alpha' = \alpha \COMP b$.
% \SP
 
If we restrict the morphisms in $\POW$ to be monotone, or order preserving,
as in Section \ref{MT},
that is, $X \subseteq Y$ implies $X.f \subseteq Y.f$ then we have
an exact analogue to $\POSET$, but over $\POW$, not $\SET$.
This category, which we call $\TRANS$, has the objects of $\POW$ as
its objects, and the collection of all monotone
transformations $2^S \FTRANS 2^T$.
It is simply a restriction on the morphisms of $\POW$.
Proposition \ref{p.MONOTONE} asserts that composition in $\TRANS$ is
still well defined.
% Readily, the identity in $\TRANS$ is that in $\POW$ is 
% $X.id_S = X$.
Much of Section \ref{T} is simply concerned with examining the
properties of transformations $f$ in $\TRANS$.
% 
% Another common category example that is found in many introductory
% texts is that of a single partially ordered set, $(S, \leq)$, 
% treated as a category in its own right.
% Call it $\POSETS$.
% That is, the objects are elements $x \in S$, with morphisms
% (arrows) $x \ARROW y$ if and only if $x \leq y$ in $(S, \leq)$.
% Readily, associativity is satisfied by transitivity of $\leq$, and
% $i_S:x \ARROW x$ follows from reflexivity.
% We could similarly, consider a single power set $(2^S, \subseteq)$
% partially ordered by $\subseteq$ as a category and call it $\POWS$.
% 
When the morphisms in $\TRANS$ are of the form $2^S \ATRANS 2^S$, which
we have called ``operators'', they constitute a category $\OPR \subset \TRANS$.
% Now we consider subsets $X, Y \in 2^S$ as objects, and morphisms
% $\OPER:X \ARROW Y$ if and only if $X \subseteq Y$.
% Again, associativity and existence of identity are evident.
% % In this case, the morphisms are all expansive operators since
% % $X \subseteq X.\OPER$.
% % Call this sub-category an example of $\EXPS$, the collection
% % of all expansive operators within $\POWS$.
% 
% In $\OPR$, only $\EMPTYSET$ is terminal, since for any operator $\OPER$ and  singleton
% set $\{x\}$ we can have $\{x\}.id_S = \EMPTYSET.\OPER = \{x\}$.

Our final example is that of $\DOM$, the category of all 
expansive, monotone operators, $\REG$ on  power sets
$2^S$; that is, the category of dominating operators.\footnote
	{
	Note that ``$dom$'' is the standard categorical way of denoting
	the $domain$ of a morphism, that is, if $\OPER:2^S \ARROW 2^T$
	then $dom\ \OPER = 2^S$ and $cod\ \OPER = 2^T$, where $cod$ 
	denotes $codomain$.
	We do not use the terminology $dom$ or $cod$ in this paper.
	}
The objects of $\DOM$ are precisely those of $\POW$, that is
$2^S$ partially ordered by $\subseteq$.
Its morphisms are all expansive, monotone (order preserving) functions (operators)
$\OPER : (S, \subseteq) \ARROW (S, \subseteq)$,
such that $X \subseteq Y$ implies $X.\OPER \subseteq Y.\OPER$ and $X \subseteq X.\OPER$.
Readily, the identity operator $id_S$ preserves the order $\subseteq$.
The usual composition, $\OPER \COMP \beta$ is order preserving.
Readily, $\DOM \subset \OPR \subset \TRANS \subset \POW$.
% The category of closure operators on $S$, $\CLO \subset \OPR$, is explored in 
% \cite{PfaSla13}.

\subsection {Pullbacks} \label {PULL}

Recall that
the {\bf pullback} of a pair of morphisms, $Y \FTRANS Z$ and $X \GTRANS Z$,
is an object $W$ and two morphisms $W \FPTRANS X$, $W \GPTRANS Y$ such
that $g' \COMP f = f' \COMP g$.
Moreover, if 
there exist morphisms $V \HXTRANS X$, $V \HYTRANS Y$,
such that $h_Y \COMP f = h_X \COMP g$ then
there exists a $unique$ morphism
$V \UTRANS W$
%$u : X \ARROW W$ 
such that $h_Y = u \COMP g'$ and $h_X = u \COMP f'$.

\begin{samepage}
\begin{proposition}\label{p.ANTIMATROID}
Let $\CAT \subset \DOM$ be a subcategory.
Its morphisms $\DEL$ are 
antimatroid closure operators if and only if $\CAT$
exhibits the pullback property of Figure \ref{PULLBACK2}.
\end{proposition}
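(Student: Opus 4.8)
The plan is to prove the two implications separately, in each case reading the pullback square of Figure~\ref{PULLBACK2} through the dictionary ``arrows into a common object $Z$ $=$ generators of the closed set $Z$, and the pullback object $=$ their intersection.'' Under this dictionary the two halves of the pullback property correspond to the two equivalent faces of the antimatroid condition established earlier: existence of the pullback object should be Proposition~\ref{p.FGEN1} (the intersection of two generators of $Z$ is again a generator of $Z$), and uniqueness of the mediating morphism $u$ should be unique generation, which by Proposition~\ref{p.FGEN2} is equivalent to $\DEL$ being antimatroid. A convenient economy is that I need not argue idempotency directly: the text has already shown, via Proposition~\ref{p.UGGC} and Corollary~\ref{c.GC4}, that every uniquely generated dominating operator is automatically an antimatroid \emph{closure} operator. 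So both directions can be organized around the single pivot ``pullback property $\Longleftrightarrow$ unique generation.''

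For the converse ($\Leftarrow$), I would assume $\CAT$ has the pullback property and show each dominating morphism $\DEL$ is uniquely generated; the antimatroid and closure conclusions then follow from the cited chain. I would argue by contraposition: suppose some image $Z = X.\DEL = Z_0.\DEL$ has two distinct minimal generators $X \neq Z_0$. Following the computation recorded after Proposition~\ref{p.UGGC}, where non-unique generation makes $\DEL \COMP \GEN$ fail to be contractive ($X.\DEL.\GEN = X \cup Z_0 \not\subseteq X$), I would build a test object $V$ together with arrows $h_X, h_Y$ satisfying $h_Y \COMP f = h_X \COMP g$ that factor through the pullback object $W$ in two genuinely different ways, once via $X$ and once via $Z_0$. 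This would produce two distinct mediating morphisms $u$, contradicting the uniqueness clause of the universal property.

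For the forward direction ($\Rightarrow$), I would assume every morphism $\DEL$ of $\CAT$ is an antimatroid closure operator and construct the pullback of $X \GTRANS Z$ and $Y \FTRANS Z$ explicitly. Since $X$ and $Y$ are $\DEL$-generators of the common closed set $Z = X.\DEL = Y.\DEL$, Proposition~\ref{p.FGEN1} supplies $W = X \cap Y$ with $W.\DEL = Z$, giving the projections $f', g'$ and the commuting identity $g' \COMP f = f' \COMP g$. To verify the universal property I would take any $V$ with $h_X, h_Y$ satisfying $h_Y \COMP f = h_X \COMP g$ and exhibit the mediating $u$ into $W$; here I would exploit the Galois connection $(\DEL, \GEN)$ of Proposition~\ref{p.UGGC}, whose adjointness forces the factorization to land in $X \cap Y$, and unique generation (Proposition~\ref{p.FGEN2}) to force $u$ to be the \emph{only} such morphism.

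The step I expect to be the real obstacle is the uniqueness of the mediating morphism, and it is delicate for a reason already flagged in the paper: the remark after Proposition~\ref{p.IC4} warns that the $\DEL$-image of a minimal generator need not itself be minimal. Consequently $u$ cannot simply be named ``the minimal generator'' and read off the diagram; I expect the careful part to be showing that the unique-generation hypothesis is exactly what collapses all candidate mediating maps to one, and, in the converse, that a failure of unique generation yields a second mediating map that is genuinely distinct rather than an artifact of this non-minimality.
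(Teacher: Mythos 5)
Your proposal takes essentially the same route as the paper's own proof: in the forward direction you obtain the pullback object $W = X \cap Y$ from Proposition~\ref{p.FGEN1}, and in the converse you read uniqueness of the mediating morphism $u$ as unique generation and then conclude that $\DEL$ is an antimatroid closure operator via Proposition~\ref{p.UGGC}, Corollary~\ref{c.GC4}, and Proposition~\ref{p.FGEN2}, which is exactly the paper's chain of citations. Your contrapositive construction of two distinct mediating maps, and your caution that images of minimal generators need not be minimal, simply flesh out details the paper's terser argument leaves implicit.
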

\end{samepage}
% #ifndef 10
% #define	10 0
% #endif
% #if 10 > 7
\noindent
\begin{proof}
Suppose that the dominating morphisms $\REG$ of $\CAT \subseteq \DOM$
are antimatroid closure operators.
We must show that the pullback diagram of Figure \ref{PULLBACK2} is 
satisfied.
We assume that $\REG_X:X \ARROW Z$ and $\REG_Y:Y \ARROW Z$, so $X$ and $Y$ are 
generators of $Z$.
Let $V$ be any set $V \subseteq_X X$, $V \subseteq_Y Y$ such that
$V.(\subseteq_X \COMP \REG_X) = Z$ and $V.(\subseteq_Y \COMP \REG_Y) = Z$, so
$V$ is a generator of $Z$.
By Prop. \ref{p.FGEN1}, $W = X \cap Y$ is the unique pullback of these 
two generators.
\\
Conversely, let the morphisms $\DEL \in \CAT$ exhibit the pullback property
of Figure \ref{PULLBACK2}.
$X$ and $Y$ are generators of $Z$.
Since $V \UTRANS W$ is unique, by Prop. \ref{p.FGEN2}, $\DEL$ is uniquely generated.
By Prop. \ref{p.UGGC}, $\DEL$ must be a closure operator.
\qed
\end{proof}
% #endif
% #ifdef 1
% \noindent
% {\tt PULLBACK2}
% #endif
\begin{figure}[ht]
\centerline{\psfig{figure=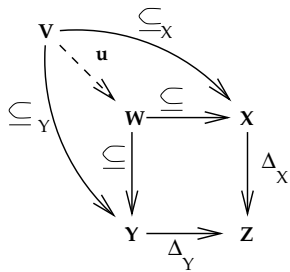,height=1.4in}}
\caption{The pullback of antimatroid generators.
\label{PULLBACK2} }
\end{figure}

\noindent
Observe that in Figure \ref{PULLBACK2}, the sets $V, W, X, Y$ are all generators of $Z$.
It is known that pullback diagrams preserve monomorphisms and retractions
\cite{Mac98}.

It is conjectured that an analog of Proposition \ref{p.ANTIMATROID} is true as well,
that is that a subcategory of $\DOM$ satisfying the ``push-out'' property must consist
of matroid closure operators.

\section {Summary} \label{S}

There are many more dominating than closure operators,\footnote
	{
	If $|S| = n \geq 10$, there exist more than $n^n$ distinct
	antimatroid closure operators
	\cite{Pfa95c}.
	}
in spite of Proposition \ref{p.RC} which established that 
for every dominating operator there exists
a corresponding (not necessarily unique) closure operator.
While dominating operators are more ubiquitous, for example most network
operators associated with internet analysis are expansive;
closure operators are more structured.
By Proposition \ref{p.UGGC}, only antimatroid closure operators can be
uniquely generated.
However, dominated closure has been used to reduce social networks in a
way that preserves path connectivity
\cite{Pfa13}.

The dominated closure,
$\CL_{\REG}$ defined by (\ref{RCDEF}) is often called a {\bf neighborhood closure},
because the definition (\ref{RCDEF}) can be rewritten as 
\begin{equation} \label{RCDEF2}
Y.\CL = Y \cup \bigcup_{\{z\} \subseteq Y.\NBHD} \{ \{z\} | \{z\}.\REG \subseteq Y.\REG \}
\end{equation}
which, in many cases, is computationally much more efficient.

Dominating operators can be easily computed.
A common way, in practice, of defining a dominating operator and its
dominated neighborhood is by an adjacency matrix, $\CALA$.
For each row $i$ of $\CALA$, if $(i,k) \neq 0$, then $k$ is in the region dominated by $i$, or
$\{k\} \subseteq \{i\}.\REG$.
Thus $\CALA$ can be the base of a graphic representation, $G$,  described in Section \ref{EO}.
If $\REG$, and $\CALA$, are symmetric, the graph $G$ is undirected.
In most applications, $Y.\REG$ is assumed to be $\bigcup_{\{y\} \subseteq Y} \{\{y\}.\REG \}$,
or all elements directly connected to $Y$ in $G$, $i.e.$ $\REG$ is an extended operator.

However, one might want much larger sets $Z$ to have a much wider scope of dominance.
Similarly, in $\CALA \times  \CALA$, $(i, k) \neq 0, $ if $k$ is two, or fewer, links from $i$.
It is still a dominance region.
But, it is not graphically representable, nor is it extensible.

The term ``domination'' is well established in the graph theory literature, but we
prefer to think of these expansive, outward accessing operators as ``exploratory''
operators, especially when viewing their role in social network analysis.
Some educators have suggested that ``knowledge spaces'' might be modelled
by closed sets.
% \cite{XXX}.
If so, ``exploratory'' closure, $\CL_{\Delta}$, becomes a plausible mechanism for ``learning''.
One can regard $S$ as a universe of related ``experiences'' and ``knowledge'' to be a 
collection $Y$ of such experiences, or skills.
An experience $f$ expands one's knowledge if it is congruent with one's existing knowledge
set $Y$;
that is if $\{f\}.\NBHD \subseteq Y.\NBHD$.
That is, the connections of $f$ are congruent with the connections of $Y$; it makes sense.

Dominated closure may find other applications as well.
In
\cite{Pfa13c},
it was shown that one can define ``fuzzy'' dominated closure, yet still retain
many of the crisp properties of closure operators.

It is evident that many of the preceding results, which have been expressed
in operator terminology, can be recast using graph terminology.
In return, graph theory can
provide a rich source of discrete set systems.
In particular, domination
\cite{HayHedSla98b,HayHedSla98,Sum90}
and closure
\cite{Cas03,FarJam86,JamPfa05,Ore46}
have been well studied and can provide many operator examples.

{\small
% {\footnotesize
\bibliography{/home/jlp/MATH.d/BIB.d/mathDB,/home/jlp/MATH.d/BIB.d/pfaltzDB,/home/jlp/MATH.d/BIB.d/otherDB,/home/jlp/MATH.d/BIB.d/orlandicDB,/home/jlp/MATH.d/BIB.d/haddletonDB,/home/jlp/MATH.d/BIB.d/databaseDB,/home/jlp/MATH.d/BIB.d/evsciDB,/home/jlp/MATH.d/BIB.d/aiDB,/home/jlp/MATH.d/BIB.d/closureDB,/home/jlp/MATH.d/BIB.d/dataminingDB,/home/jlp/MATH.d/BIB.d/conceptDB,/home/jlp/MATH.d/BIB.d/accessDB,/home/jlp/MATH.d/BIB.d/logicDB,/home/jlp/MATH.d/BIB.d/closureappDB,/home/jlp/MATH.d/BIB.d/gtransDB,/home/jlp/MATH.d/BIB.d/topologyDB,/home/jlp/MATH.d/BIB.d/socialDB}
}

\end{document}